\documentclass{article}

\usepackage{arxiv}
\makeatletter
\AtBeginDocument{%
  \pagestyle{fancy}%
  \rhead{}%
}

\makeatother
\usepackage{booktabs}
\usepackage{tabularx}
\usepackage[utf8]{inputenc} 
\usepackage[T1]{fontenc}    
\usepackage{amssymb} 
\usepackage{amsthm} 

\usepackage{url}            
\usepackage{amsfonts}       
\usepackage{microtype}      
\usepackage{graphicx}
\usepackage{amsmath}
\usepackage{array} 
\usepackage{natbib}
\usepackage{lineno}         
\usepackage{longtable}
\usepackage{pdflscape}

\usepackage{tikz}
\usepackage{tikz-3dplot}
\usetikzlibrary{arrows.meta,positioning,calc,shapes.symbols}
\definecolor{diagramblue}{RGB}{13,101,139}

\usepackage{authblk}

\makeatletter
\renewcommand\AB@affilnote[1]{\textsuperscript{#1}\,}
\makeatother

\usepackage[colorlinks=true,linkcolor=black,citecolor=blue,urlcolor=blue]{hyperref}

\usepackage{enumitem}
\setlist[itemize]{leftmargin=1.6em}
\setlist[enumerate]{leftmargin=1.8em}

\newtheorem{definition}{Definition}

\newtheorem{proposition}{Proposition}
\newtheorem{theorem}{Theorem}

\newcommand{\Sys}{\mathsf{Sys}}
\newcommand{\Traj}{\mathsf{Traj}}
\newcommand{\Time}{\mathsf{T}}
\newcommand{\Ob}{\mathrm{Ob}}
\newcommand{\Hom}{\mathrm{Hom}}
\newcommand{\id}{\mathrm{id}}

\newcommand{\diam}{\diamondsuit}

\title{A Category Theory Account of AI Identity}

\author[1,2,3]{Andrea Ferrario} 
\affil[1]{Institute of Biomedical Ethics and History of Medicine, University of Z\"urich, Z\"urich, Switzerland}
\affil[2]{SUPSI, Dalle Molle Institute for Artificial Intelligence (IDSIA), Lugano, Switzerland}
\affil[3]{ETH Z\"urich, Z\"urich, Switzerland}

\begin{document}

\maketitle

\begin{abstract}
Artificial intelligence (AI) systems are routinely modified after deployment through retraining, reconfiguration, and changes in their operational environments. These transformations raise a metaphysical question with direct consequences for AI governance: under what conditions does an AI system remain the same system over time or across deployments? Earlier trustworthiness-based work formulates synchronic and diachronic identity propositionally, by relating identity within a fixed AI system type to equality of trustworthiness levels. Such criteria specify when identity statements are true, but leave implicit the structure of the states compared, the transformations connecting them, and the temporal organization of persistence. We develop a category-theoretic formalization of AI identity. An AI system type is specified by a datum \(\diam=(F,P,L_P)\), consisting of a techno-function, a trustworthiness profile, and a trustworthiness-level function. Profile-relative states are connected by admissible lifecycle paths, which are restricted to trustworthiness-level-preserving transformations and quotiented to obtain a thin reachability category. Temporally admissible functors represent AI system histories, while time-synchronous natural transformations compare realized histories. The formalization yields two categorical interpretations of the earlier propositional criteria. A weak interpretation recovers identity, within a fixed datum \(\diam\), as equality of trustworthiness level. A strong interpretation refines it by requiring mutual trustworthiness-preserving reachability, expressed through state isomorphism or natural isomorphism of realized histories. Category theory therefore replaces a single undifferentiated identity relation with a structured hierarchy of diachronic and synchronic criteria. The resulting framework identifies identity-related preconditions for transferring responsible AI claims, evidence, and governance procedures across versions or deployments, without treating categorical identity as sufficient by itself for such transfer.
\end{abstract}

\noindent\textbf{Keywords:} AI identity; artificial intelligence systems; trustworthiness; category theory; artifact metaphysics; AI governance; lifecycle management; MLOps.

\section{Introduction}
\label{section:introduction}
The idea that entities require identity criteria for ontological respectability has a long philosophical tradition, canonically expressed by Quine's dictum ``no entity without identity'' \citep[p.~44]{quine1969ontological}. Identity criteria specify the conditions under which an entity at one time or in one setting is the same as, or different from, an entity at another. Without such criteria, it remains unclear what is being counted, compared, governed, or held responsible across change.

Artificial intelligence (AI) systems make this longstanding problem especially pressing. Deployed AI systems are routinely modified through retraining, fine-tuning, recalibration, threshold adjustment, model replacement, rollback, data refresh, pipeline reconfiguration, monitoring interventions, and changes in their operational environments \citep{kreuzberger2023machine}. These transformations may be necessary to correct errors, respond to distribution shift, improve fairness or robustness, or adapt a system to changing operational or responsible AI requirements. Some systems are accessed globally through large numbers of personalized or deployment-specific instances. These changes and apparent multiplicity raise the metaphysical question: \emph{when does a changing AI system remain the same system?} And, further, \emph{when are AI systems the same system?} These classical metaphysical questions have immediate consequences for AI governance. A performance evaluation, fairness analysis, conformity assessment, or post-market monitoring result is produced for a particular system under particular conditions \citep{EU_AI_Act_2024}. Applying it to a later version or a different deployment presupposes that the relevant object has remained \emph{sufficiently} identical. Similarly, regulatory concepts such as substantial modification distinguish changes that preserve the governed system from those that may generate a new object of evaluation or responsibility \citep{EU_AI_Act_2024,ferrario_update2026,ferrario2026high}.

Ferrario recently addressed this philosophical problem by adapting the \texttt{function}\textsuperscript{\tiny +} account of \citet{carrara2009fine} to the metaphysics of AI systems \citep{ferrario2025trustworthiness}. On this account, artifact kinds are fixed by their techno-functions, while the identity and persistence of their instances additionally depend on the operational principles and admissible configurations through which those functions are correctly realized. For AI systems, the relevant operational principle is expressed through \emph{trustworthiness}: the collection of performance, robustness, fairness, explainability, safety, security, auditability, oversight, and related commitments that a system must satisfy in order to function \emph{correctly}. Trustworthiness thereby provides a governance-relevant interpretation of the contextual and normative embedding that artifact metaphysics such as the \texttt{function}\textsuperscript{\tiny +} account by \citet{carrara2009fine} treats as constitutive of technological identity \citep{HLEG2019,EU_AI_Act_2024}. On this basis, Ferrario introduced synchronic and diachronic criteria of AI identity that depend on how the trustworthiness of AI systems is operationalized and measured over time \citep{ferrario2025trustworthiness}. However, the criteria proposed in \citet{ferrario2025trustworthiness} are propositional: they specify synchronic and diachronic identity through biconditionals involving a fixed AI system type and equality of trustworthiness levels. They thereby provide truth conditions for identity claims at selected times, but do not yet represent the internal structure of the compared states, the admissible transformations connecting them, the directionality and composition of those transformations, or the temporal coherence of complete histories. In particular, equality of trustworthiness levels does not distinguish between states connected in one direction, states connected in both directions, and states between which no admissible transformation exists.

In this work, we formalize Ferrario's trustworthiness-based account of AI identity using category theory. Category theory is well suited to this task because it characterizes objects through their admissible transformations, invariants, compositions, and structural relations rather than through component-wise equality alone \citep{eilenberg1945general,maclane1963natural,mac1971categories}. Historically, it emerged from the study of natural equivalences and may be understood as extending the structural perspective of Klein's \emph{Erlanger Programm}: what matters is not merely what an object contains, but which transformations preserve the structure regarded as essential \citep{marquis2008geometrical}. This perspective is particularly appropriate for AI systems, whose persistence may consist precisely in preserving governance-relevant structure through substantial material, computational, and organizational change. Thus, category theory makes it possible to lift the propositional criteria into a relational and temporal structure. The resulting categorical framework supports two interpretations of AI identity. The \emph{weak interpretation} recovers  Ferrario's original propositional criteria: within a fixed type datum, states are identified whenever they belong to the same trustworthiness-level fibre. 
The \emph{strong interpretation} adds a transformation-grounded condition: AI states are identical only when they are mutually reachable through admissible trustworthiness-level-preserving paths. At the level of realized AI system history functors, the corresponding strong criterion is natural isomorphism witnessed by time-synchronous comparison morphisms. Thus, the strong interpretation of AI identity is a stricter categorical refinement made available by representing the transformations and histories that the propositional formulation leaves implicit.

Following \citet{ferrario2025trustworthiness}, our category-theoretic construction begins with an AI system type datum
\(\diam=(F,P,L_P)\), consisting of a techno-function \(F\), a trustworthiness profile \(P\), and a trustworthiness-level function \(L_P\). Profile-relative states combine quantified assessments \(q_P\in I_P\) of the trustworthiness profile with their associated levels \(L_P(q_P)\). Admissible AI lifecycle transformations generate a path category, which is restricted to trustworthiness-level-preserving paths and quotiented by an equivalence relation that abstracts from their concrete provenance. The resulting thin category \(\Sys_\diam\) records directed trustworthiness-preserving reachability between states. 
Post-deployment AI system histories are represented by temporally admissible functors
\[
\mathsf A_\diam:\Time_{t_0}\to\Sys_\diam.
\]
For a fixed observation time \(T\geq t_0\), their restrictions to the interval \([t_0,T]\), when instantiated by at least one deployed AI system realization---equivalently: \emph{token}---define \emph{realized AI system histories}. These form the objects of the category
\[
\Traj_{\diam,[t_0,T]},
\]
whose morphisms are time-synchronous natural transformations comparing realized histories through states occupied at the same time. Ferrario's AI identity criteria are then retrieved and expanded by studying identity and isomorphism in \(\Sys_\diam\), and natural isomorphism between realized AI system histories in \(\Traj_{\diam,[t_0,T]}\). Our formalization is neutral among competing metaphysical theories of persistence \citep{lowe1983identity,baker2004ontology,williamson2013identity}. Its objects are profile-relative states and realized AI system histories rather than complete temporal parts or individuated concrete AI system tokens. The functorial account may therefore be interpreted either as organizing time-relative stages or as representing the changing properties of enduring systems. Its central commitment is that AI system identity under change depends on temporally ordered states, admissible lifecycle transformations, realized histories, and explicitly specified trustworthiness invariants. 

This work makes three principal contributions.
\begin{enumerate}[label=(\roman*),nosep]
\item It provides a categorical reconstruction of trustworthiness-based AI system identity by formalizing profile-relative states, admissible lifecycle paths, trustworthiness-preserving reachability, and realized
AI system histories.

\item It distinguishes a weak level-theoretic identity criterion from a strong transformation-grounded criterion. The weak criterion recovers equality of trustworthiness level \citep{ferrario2025trustworthiness}, whereas the strong criterion requires mutual trustworthiness-preserving reachability, time-synchronously witnessed for comparisons of realized histories.

\item It identifies identity-related preconditions for considering the transfer of responsible AI evidence, claims, and governance procedures across versions and deployments, without treating categorical identity as sufficient for unrestricted transfer.
\end{enumerate}


The remainder of the paper proceeds as follows. Section~\ref{section:categorical_prelim} introduces the required categorical notions. Section~\ref{section:AI_meta_TW} presents the trustworthiness-based metaphysical foundations. Section~\ref{section:cat_AI_identity} constructs the state category \(\Sys_\diam\), and Section~\ref{section:AI_identity_functors} introduces temporally admissible AI system histories and the category of realized AI system histories \(\Traj_{\diam,[t_0,T]}\). Section~\ref{section:AI_identity_category} develops the weak and strong identity criteria and illustrates them through examples. The final sections discuss the philosophical and governance implications of the categorical framework.

\section{Categorical Preliminaries}
\label{section:categorical_prelim}
This section introduces the minimal categorical notions used in the remainder of the paper: categories, isomorphisms, functors, natural transformations, thin categories, categorical congruences, quotient categories, and maximal subgroupoids. We refer to standard sources for examples and further details \citep{eilenberg1945general,mac1971categories,spivak2014category,yanofsky2024monoidal}.

\begin{definition}[Category]
A \emph{category} $\mathcal{C}$ consists of:
\begin{itemize}[itemsep=0pt]
    \item a collection of objects $\Ob(\mathcal{C})$;
    \item for each $X,Y\in\Ob(\mathcal{C})$, a collection of morphisms $\Hom_{\mathcal{C}}(X,Y)$;
    \item for each object $X$, an identity morphism $\id_X\in\Hom_{\mathcal{C}}(X,X)$;
    \item a composition operation
    \[
    \circ:\Hom_{\mathcal{C}}(Y,Z)\times \Hom_{\mathcal{C}}(X,Y)\to \Hom_{\mathcal{C}}(X,Z),
    \]
    satisfying associativity and unit laws.\footnote{Following \citep{yanofsky2024monoidal}, we use ``collections'' for $\Ob(\mathcal{C})$ and $\Hom_{\mathcal{C}}(X,Y)$ instead of ``sets'' or ``classes''. Nothing in the present paper depends on a particular choice of set-theoretic foundation or on size distinctions between small and large categories.}
\end{itemize}
\end{definition}

\begin{definition}[Isomorphism]
A morphism \(f:A\to B\) in a category \(\mathcal C\) is an \emph{isomorphism} if there exists a morphism \(g:B\to A\) such that
\[
g\circ f=\id_A,
\qquad
f\circ g=\id_B.
\]
The morphism \(g\) is then uniquely determined and is called the inverse of \(f\).
\end{definition}

Two objects \(A\) and \(B\) of \(\mathcal C\) are \emph{isomorphic} if there exists an isomorphism \(f:A\to B\) in \(\mathcal C\). In that case, we write
\[
A\cong_{\mathcal C}B.
\]

\begin{definition}[Functor]
A \emph{functor} $\mathsf F:\mathcal{C}\to\mathcal{D}$ assigns objects of $\mathcal{C}$ to objects of $\mathcal{D}$ and morphisms of $\mathcal{C}$ to morphisms of $\mathcal{D}$, preserving identities and composition:
\[
\mathsf F(\id_X)=\id_{\mathsf F (X)},\qquad \mathsf F(g\circ f)=\mathsf F(g)\circ \mathsf F(f).
\]
\end{definition}

Maps between functors are called natural transformations:

\begin{definition}[Natural transformation]
Given functors $\mathsf F,\mathsf G:\mathcal{C}\to\mathcal{D}$, a \emph{natural transformation}
\(\eta:\mathsf F\Rightarrow \mathsf G\)
is a family of morphisms
\(
\{\eta_X:\mathsf F(X)\to \mathsf G(X)\}_{X\in\Ob(\mathcal{C})}
\)
such that, for every morphism $f:X\to Y$ in $\mathcal{C}$, the following naturality condition holds:
\[
\mathsf G(f)\circ \eta_X=\eta_Y\circ \mathsf F(f).
\]
\end{definition}

\begin{definition}[Natural isomorphism]
Let \(\mathsf F,\mathsf G:\mathcal C\to\mathcal D\) be functors. A natural transformation
\(\eta:\mathsf F\Rightarrow\mathsf G\)
is a \emph{natural isomorphism} if every component
\[
\eta_X:\mathsf F(X)\to\mathsf G(X)
\]
is an isomorphism in \(\mathcal D\). Equivalently, there exists a natural transformation
\(\eta^{-1}:\mathsf G\Rightarrow\mathsf F\)
such that
\[
\eta^{-1}\circ\eta=\id_{\mathsf F},
\qquad
\eta\circ\eta^{-1}=\id_{\mathsf G}.
\]
In this case, the functors \(\mathsf F\) and \(\mathsf G\) are said to be \emph{naturally isomorphic}, written
\(\mathsf F\cong\mathsf G\).
\end{definition}

\begin{definition}[Functor category]
Let \(\mathcal C\) and \(\mathcal D\) be categories. The
\emph{functor category} \([\mathcal C,\mathcal D]\)
has functors \(\mathcal C\to\mathcal D\) as objects and natural transformations as morphisms. Identities and composition are defined componentwise.
\end{definition}

The following categorical notions are important.

\begin{definition}[Quotient category]
Let \(\mathcal C\) be a category, and suppose that each hom-set
\(\Hom_{\mathcal C}(X,Y)\)
is equipped with an equivalence relation \(\sim\) compatible with composition: whenever
\(f\sim f'\) and \(g\sim g'\), with the relevant composites defined, then
\(g\circ f\sim g'\circ f'.\) The \emph{quotient category} \(\mathcal C/{\sim}\) has the same objects as \(\mathcal C\), and its morphisms are equivalence classes
\[
\Hom_{\mathcal C/{\sim}}(X,Y)
=
\Hom_{\mathcal C}(X,Y)/{\sim}.
\]
Identities and composition are defined by
\[
\id_X=[\id_X],
\qquad
[g]\circ[f]=[g\circ f].
\]
Compatibility with composition ensures that these operations are well defined.
\end{definition}

\begin{definition}[Thin category] A category \(\mathcal C\) is \emph{thin} if every hom-set contains at most one morphism. 
\end{definition} 

\begin{proposition}[Commutativity in thin categories]
\label{prop:thin_categories_commute}
Let \(\mathcal C\) be a thin category. Then every diagram in \(\mathcal C\) commutes whenever all the morphisms and composites occurring in the diagram exist.
\end{proposition}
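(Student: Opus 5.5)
The plan is to reduce the statement to the meaning of ``commutes'' and then apply thinness directly. A diagram in \(\mathcal C\) commutes when, for every pair of objects \(X,Y\) in the diagram, any two composable paths of arrows from \(X\) to \(Y\) give the same composite morphism in \(\Hom_{\mathcal C}(X,Y)\). Here a path of length zero is read as \(\id_X\). So the first step is to fix such a pair of parallel paths and write their composites as \(f=f_n\circ\cdots\circ f_1\) and \(g=g_m\circ\cdots\circ g_1\). The hypothesis that all relevant composites exist means that both are well-defined morphisms \(X\to Y\).

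The second step is to note that \(f,g\in\Hom_{\mathcal C}(X,Y)\). Since \(\mathcal C\) is thin, this hom-set has at most one element. It is nonempty because it contains \(f\), so it has exactly one element, and therefore \(f=g\).

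The same argument covers the degenerate cases without change. A loop \(X\to X\) is compared with \(\id_X\), and since both lie in the singleton \(\Hom_{\mathcal C}(X,X)\) they are equal. When \(n\) or \(m\) is zero, one side is an identity morphism, which still lies in the same hom-set as the other composite.

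There is no real obstacle; the argument is a single application of thinness. The one point to state explicitly is that ``commutes'' is defined as equality of all parallel composites, including empty paths. With that definition in place, the existence hypothesis is exactly what puts both morphisms into the same hom-set.
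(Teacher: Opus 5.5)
Your proposal is correct and matches the paper's proof: any two parallel composites lie in the same hom-set, which has at most one element by thinness, so they are equal. Your explicit treatment of empty paths and loops (comparing with \(\id_X\)) is a harmless elaboration of something the paper's one-line argument leaves implicit.
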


\begin{proof}
Any two paths in the diagram with the same source and target determine parallel composite morphisms. Since \(\mathcal C\) is thin, the hom-set between any two objects contains at most one morphism. Hence the two composites must be equal.
\end{proof}

Note that a thin category is equivalently a preorder represented categorically. Given a preorder \((X,\leq)\), define a category \(\mathcal C_{(X,\leq)}\) with objects the elements of \(X\) and
\[
\Hom_{\mathcal C_{(X,\leq)}}(x,y)=
\begin{cases}
\{\star_{x,y}\} & \text{if } x\leq y,\\
\varnothing & \text{otherwise.}
\end{cases}
\]
Reflexivity gives the identity morphisms, transitivity gives composition, and each hom-set contains at most one morphism.

\begin{definition}[Core of a category] The \emph{core}, or maximal subgroupoid, of a category \(\mathcal C\), denoted by \(\operatorname{Core}(\mathcal C)\), is the subcategory with the same objects as \(\mathcal C\) and only the isomorphisms of \(\mathcal C\) as morphisms. \end{definition}

\section{Trustworthiness-Based Metaphysics of AI Systems}
\label{section:AI_meta_TW}
Defining AI systems is notoriously difficult because the term encompasses a wide variety of software- and hardware-based artifacts. For the purposes of this paper, we adopt the definition provided in Article~3 of the EU AI Act. Accordingly, an AI system is

\begin{quote}
a machine-based system that is designed to operate with varying levels of autonomy and that may exhibit adaptiveness after deployment, and that, for explicit or implicit objectives, infers, from the input it receives, how to generate outputs such as predictions, content, recommendations, or decisions that can influence physical or virtual environments (Art.~3, \citep{EU_AI_Act_2024}).
\end{quote}

This deliberately broad definition encompasses, among others, medical systems that predict pathophysiological states such as sepsis, decision-support systems used in financial services for credit lending or know-your-customer procedures, and conversational agents based on large language models. Having fixed the class of systems under consideration, we now turn to the property that anchors their identity in the present account: trustworthiness.

\subsection{Trustworthiness as an Anchor for AI System Identity}
\label{subsection:TW_identity_anchor}

\subsubsection{The \texttt{function}\textsuperscript{\tiny +} Account of Technical Artifacts}
Our account of AI identity builds on the \texttt{function}\textsuperscript{\tiny +} metaphysics of technical artifacts developed by \citet{carrara2009fine}. The motivation for their account lies in a longstanding dispute about whether artifacts possess genuine identity and persistence conditions. On a traditional anti-realist view of artifact kinds, artifacts do not have the metaphysical standing of natural entities. Natural things, such as organisms, appear to come with internal principles of development, activity, maintenance, and decay that help determine when they begin to exist, persist, and cease to exist. Artifacts, by contrast, seem to depend on human intentions, practices, and classifications. They ``may
not seem to be supplied with well-defined or well-grounded \emph{persistence conditions}'' \citep[p.~17, emphasis in original]{lowe2014how}. 
A related difficulty concerns the basis on which artifacts should be individuated. If artifacts are individuated by their material parts, then ordinary repair, replacement, redesign, or reconfiguration threaten their persistence, as the ship of Theseus puzzle shows \citep{hobbes1655corpore}. If they are individuated only by function, then their identity criteria seem too coarse-grained \citep{wiggins2001sameness}: very different objects may perform the same function, and function alone therefore seems insufficient to determine what kind of artifact something is, or whether it remains the same artifact over time.

\citet{baker2004ontology} and \citet{elder2004real} resist this anti-realist tendency by defending the metaphysical reality of artifacts and by treating artifact functions as identity-relevant. Carrara and Vermaas accept this realist re-orientation, but refine it with their \texttt{function}\textsuperscript{\tiny +} account.  Their central point is that artifact identity cannot be fixed either by material constitution alone or by function alone. It requires a ``conjunction'' of function and constraints on its admissible realization. This is the role of the ``{\tiny +}'' in \texttt{function}\textsuperscript{\tiny +}: an artifact kind is determined by a \emph{techno-function}, namely the technical capacity that artifacts of that kind are designed to realize, together with an operational principle and a normal configuration specifying how that function is to be correctly realized in practice.\footnote{Techno-functions can be specified at different levels of detail: broader specifications pick out wider artifact kinds, whereas finer specifications carve out narrower ones.}
The \texttt{function}\textsuperscript{\tiny +} account therefore permits substantial variation in material composition and technical realization. Two artifacts may instantiate the same kind and persist through change even when their components differ, provided that they retain the relevant techno-function and continue to realize it through admissible operational principles and configurations. The \texttt{function}\textsuperscript{\tiny +} framework is thus more permissive than mereological essentialism, but more discriminating than function-only individuation. In doing so, it explains how artifacts can have genuine metaphysical standing while remaining design-dependent, context-sensitive, and open to material variation.

\subsubsection{Techno-Functions, Trustworthiness Profiles, and Level Functions}
\citet{ferrario2025trustworthiness} adapts Carrara and Vermaas' \texttt{function}\textsuperscript{\tiny +} account to AI systems.  The idea behind this maneuver is that identity of AI systems cannot be fixed by material or computational constitution alone: models, datasets, interfaces, deployment environments, documentation, monitoring procedures, and organizational arrangements may change while the system remains functionally continuous. Yet function alone is also too coarse-grained. Many AI systems may share a nominal function, such as classification, prediction, recommendation, or content generation, while differing substantially in the constraints, safeguards, performance expectations, and operational conditions under which they count as appropriately functioning. The \texttt{function}\textsuperscript{\tiny +} framework addresses this problem by preserving the centrality of designed AI function while requiring further criteria that specify how that function is realized and assessed in practice. 

Applying \texttt{function}\textsuperscript{\tiny +}
to AI requires specifying the \emph{techno-function} \(F\) of these systems, namely, the goal-directed technical capability that an AI system is designed to realize. It is related to, but not identical with, the system's \emph{intended use} in regulatory terminology---see Article~3 of the EU AI Act \citep{EU_AI_Act_2024}. The intended use describes the use for which the provider presents the system, including the relevant context and conditions of use. The techno-function captures the technical capability through which that use is made possible.
\footnote{The distinction is especially important for generative and multi-purpose systems such as large language models. A broadly specified techno-function, such as next-token prediction or autocompletion, may support many intended uses, including drafting, summarization, tutoring, translation, or entertainment. In the present account, the techno-function anchors identity at the level of AI system kind, while the trustworthiness profile and level function capture the contextualized conditions under which that function counts as appropriately realized for a given intended use.}
Thus, a system whose intended purpose is to support consumer-credit decisions may have the techno-function of inferring individualized credit-risk scores from applicant data according to a specified modeling pipeline. A system whose intended purpose is emergency-room decision support may have the techno-function of predicting a patient's risk of sepsis from clinical observations. \(F\) characterizes the AI system kind at the level of designed capability, specifying what the system is meant to achieve, through which kinds of inputs, inferential procedures, models, and outputs.

For AI systems, the operational principle discussed within \texttt{function}\textsuperscript{\tiny +} is expressed through \emph{trustworthiness}: the collection of performance, robustness, fairness, explainability, safety, security, auditability, oversight, and related requirements that a system must satisfy in order to function correctly \citep{EU_AI_Act_2024,HLEG2019}. These requirements constrain admissible implementations without fixing a unique model, software stack, hardware configuration, or organizational arrangement. These requirements are collected into a trustworthiness profile.

\begin{definition}[Trustworthiness profile, \citep{ferrario2025trustworthiness}]
\label{def:TW_profile}
Let \(F\) be a techno-function. A \emph{trustworthiness profile} for \(F\) is a finite specification
\[
P:=\{p_1,\dots,p_n\}
\]
of trustworthiness dimensions, together with the requirements, measurement conventions, evidential conditions, and aggregation procedures through which those dimensions are assessed.
\end{definition}

The profile \(P\) also incorporates the intended-purpose, deployment, and institutional conditions under which these dimensions are interpreted. It therefore fixes the level of abstraction at which identity and persistence are evaluated. Typical dimensions include predictive performance, robustness, fairness, explainability, safety, security, auditability, and human oversight. Their relevance for AI governance is contextual: it depends, in particular, on the intended use of the system and the risk it poses to individuals, organizations, and society \citep{EU_AI_Act_2024}. Further, their definition and operationalization are possible at different levels of abstraction, according to contextualized standards of practice and governance cultures. However, these dimensions must be made measurable as trustworthiness should be operationalized through indicators that can be assessed in practice and used to support effective AI governance \citep{ala2020assessment,kaur2022trustworthy}. For a system assessed under \(P\), let
\[
q_P=(q_{P1},\dots,q_{Pn})\in I_P=[0,1]^n
\]
denote its quantified trustworthiness assessment. Each coordinate \(q_{Pi}\) is the normalized score assigned to dimension \(p_i\) under the measurement, normalization, and aggregation procedures \emph{fixed by} \(P\), and may represent the most recent auditable estimate available rather than an instantaneous measurement. Thus, any numerical agreement in \([0,1]\) is meaningful only relative to the common measurement protocol specified by \(P\): scores obtained through different metrics or assessment procedures are not treated as directly comparable unless \(P\) includes an explicit rule translating them into the same canonical coordinate. Different dimensions may rely on distinct metrics and aggregation rules \citep{rabanser2026towards}; for a broader survey, we refer to \citet{kemmerzell2025towards}. Let 
\begin{align*}
L_P:I_P\to[K],\quad q_P\mapsto L_P(q_P), 
\end{align*} 
be a trustworthiness-level function, where \([K]=\{1,\dots,K\}\). The vector \(q_P\in I_P\) records quantified trustworthiness measurements, while \(L_P(q_P)\) represents the resulting degree of correct functioning through a finite set of levels. \(L_P\) is part of the AI governance datum and maps the quantified assessment \(q_P\) of the trustworthiness profile \(P\) to a finite set of governance-relevant levels. Preferred choices are interpretable and auditable functions, such as stepwise mappings defined by inequalities on the components of \(q_P\): changes within a plateau are treated as tolerable variation, whereas crossing a boundary marks a governance-relevant transition requiring escalation, reassessment, or updated safeguards
\citep{ferrario2025trustworthiness,ferrario_update2026}. Such boundary crossings are treated as governance-relevant transitions requiring reassessment or escalation as they may constitute `substantial modifications' under the EU AI Act \citep{EU_AI_Act_2024,ferrario2026high}; as explained at the end of this section, they also have direct metaphysical consequences.

\begin{definition}[AI system type datum]
\label{def:AI_type}
An \emph{AI system type datum} is a triple
\[
\diam=(F,P,L_P),
\]
where \(F\) is a techno-function, \(P\) is a trustworthiness profile for systems realizing \(F\), and \(L_P\) is the trustworthiness-level function.
\end{definition}

The datum \(\diam\) combines the three elements that determine trustworthiness-based AI system identity  \citep{ferrario2025trustworthiness}. Thus, Ferrario's adaptation of the \texttt{function}\textsuperscript{\tiny +} account developed by \citet{carrara2009fine} to AI systems is similarly governance-oriented. It evaluates AI system identity relative to a fixed techno-function and to the trustworthiness requirements under which that function is correctly realized, with particular emphasis on the quantified assessment of those requirements. Finally, while the measurement and aggregation procedures used to produce the coordinates of \(q_P\) are fixed components of \(P\), and hence of the type datum \(\diam\), \emph{the assessment vector \(q_P\), by contrast, is not part of \(\diam\)}. In fact, it varies across times, deployments, copies, instantiations, and concrete AI system tokens, and will constitute the variable component of the profile-relative states introduced in the remainder of this work. Table~\ref{tab:formal_primitives} summarizes the primitive components of our formalization.

\begin{table}[ht]
\centering
\footnotesize
\renewcommand{\arraystretch}{1.25}
\begin{tabularx}{\textwidth}{
    >{\raggedright\arraybackslash}p{0.20\textwidth}
    >{\raggedright\arraybackslash}X
}
\toprule
\textbf{Symbol} & \textbf{Description} \\
\midrule

\(F\) &
\textbf{Techno-function}: functional capability the AI system is designed to realize, understood as the capability that enables its intended use in a specified domain of use. \\

\(P=\{p_1,\ldots,p_n\}\) &
\textbf{Trustworthiness profile}: the finite set of trustworthiness dimensions, together with their requirements, measurement conventions, evidential conditions, normalization rules, and aggregation procedures. \\

\(q_P\in I_P=[0,1]^n\) &
\textbf{Quantified trustworthiness profile}: the normalized assessment vector produced under the measurement and aggregation procedures fixed by \(P\). \\

\(L_P:I_P\to[K]\) &
\textbf{Trustworthiness-level function}: the governance-level map assigning each quantified profile to a finite trustworthiness level. \\

\(x=(q_P,L_P(q_P))\in S_\diam\) &
\(\diam\)\textbf{-relative AI system state}: a profile-relative state consisting of a quantified trustworthiness profile and its associated trustworthiness level. \\

\(\mathcal U_\diam\) &
\textbf{Set of admissible primitive lifecycle transformations}: the fixed vocabulary of primitive transformations allowed for systems of type \(\diam\), each interpreted as a relation on \(S_\diam\). \\

\bottomrule
\end{tabularx}
\caption{Primitive components of the formalization. }
\label{tab:formal_primitives}
\end{table}

\subsubsection{Trustworthiness-Based AI Identity Criteria}

Fix a type datum \(\diam=(F,P,L_P)\), and let \(a(t)\) and \(b(t)\) denote AI systems of type \(\diam\) considered at time \(t\). Let
\(
q^a_P(t),q^b_P(t)\in I_P
\)
be their quantified trustworthiness profiles, and define
\(
\tau_\diam(a(t)):=L_P(q^a_P(t)), \tau_\diam(b(t)):=L_P(q^b_P(t)).
\)
The AI identity criteria proposed by \citet{ferrario2025trustworthiness} can then be stated as follows.

\begin{definition}[Synchronic and diachronic AI identity]
\label{def:ferrario_identity_criteria}
Let \(a\) and \(b\) be AI systems of the fixed type \(\diam\).

Their \emph{synchronic identity relative to \(\diam\)} at time \(t\) is defined by
\begin{equation}
a(t)=_\diam b(t)
\quad\Longleftrightarrow\quad
\tau_\diam(a(t))=\tau_\diam(b(t)).
\label{eq:ferrario_synchronic_identity}
\end{equation}

The \emph{diachronic identity relative to \(\diam\)} of \(a\) between times \(t_1\) and \(t_2\) is defined by
\begin{equation}
a(t_1)=_\diam a(t_2)
\quad\Longleftrightarrow\quad
\tau_\diam(a(t_1))=\tau_\diam(a(t_2)).
\label{eq:ferrario_diachronic_identity}
\end{equation}
\end{definition}


\begin{figure}[t]
\centering
\tdplotsetmaincoords{20}{30}

\begin{minipage}[t]{0.485\textwidth}
\centering
\vspace{0pt}
\begin{tikzpicture}[
    font=\small,
    >=Stealth,
    line cap=round,
    line join=round
]

\tikzset{
  axis/.style={->, line width=0.8pt},
  curve/.style={line width=1pt},
  guide/.style={densely dashed, line width=0.55pt}
}

\begin{scope}[shift={(0,3.35)}]
  \draw[axis] (0,0) -- (5.9,0);
  \draw[axis] (0,0) -- (0,2.25);

  \node[above] at (2.9,2.05) {accuracy};
  \node[left] at (0,0) {$0$};
  \node[left] at (0,2.0) {$1$};

\draw[curve] plot[smooth,tension=0.65] coordinates {
  (0.15,1.86) (0.80,1.93) (1.40,1.82) (2.30,1.74)
  (3.10,1.63) (3.95,1.45) (4.75,1.55) (5.55,1.68)
};

  \foreach \x/\lab in {1.25/{\(t\)},3.05/{\(t'\)},4.55/{\(t''\)}}{
    \draw[guide] (\x,0) -- (\x,2.05);
    \node[below] at (\x,-0.08) {\lab};
  }
\end{scope}

\begin{scope}[shift={(0,0.35)}]
  \draw[axis] (0,0) -- (5.9,0) node[right] {time};
  \draw[axis] (0,0) -- (0,2.25);

  \node[above] at (2.9,2.05) {robustness};
  \node[left] at (0,0) {$0$};
  \node[left] at (0,2.0) {$1$};

\draw[curve] plot[smooth,tension=0.65] coordinates {
  (0.15,1.30) (0.90,1.78) (1.40,1.82) (2.20,1.55)
  (3.05,1.50) (3.80,1.15) (4.55,1.00) (5.50,0.92)
};

  \foreach \x/\lab in {1.25/{\(t\)},3.05/{\(t'\)},4.55/{\(t''\)}}{
    \draw[guide] (\x,0) -- (\x,2.05);
    \node[below] at (\x,-0.08) {\lab};
  }
\end{scope}

\end{tikzpicture}
\end{minipage}%
\hspace{0.015\textwidth}%
\begin{minipage}[t]{0.485\textwidth}
\centering
\vspace{0pt}
\begin{tikzpicture}[tdplot_main_coords, scale=3.7, line join=round, line cap=round]

\tikzset{
  traj/.style={line width=0.65pt, densely dotted},
  pt/.style={circle, fill, inner sep=1.15pt},
  lab/.style={font=\small},
  axis3d/.style={-{Stealth[length=2.5mm]}, line width=0.9pt}
}

\def\zOne{1.00}
\def\zTwo{1.5}
\def\zThr{2.0}

\coordinate (L1A) at (0.0,0.0,\zOne);
\coordinate (L1B) at (1.0,0.0,\zOne);
\coordinate (L1C) at (1.0,0.4,\zOne);
\coordinate (L1D) at (0.4,0.4,\zOne);
\coordinate (L1E) at (0.4,1.0,\zOne);
\coordinate (L1F) at (0.0,1.0,\zOne);

\coordinate (L2A) at (0.4,0.4,\zTwo);
\coordinate (L2B) at (1.0,0.4,\zTwo);
\coordinate (L2C) at (1.0,0.7,\zTwo);
\coordinate (L2D) at (0.7,0.7,\zTwo);
\coordinate (L2E) at (0.7,1.0,\zTwo);
\coordinate (L2F) at (0.4,1.0,\zTwo);

\coordinate (L3A) at (0.7,0.7,\zThr);
\coordinate (L3B) at (1.0,0.7,\zThr);
\coordinate (L3C) at (1.0,1.0,\zThr);
\coordinate (L3D) at (0.7,1.0,\zThr);

\filldraw[draw, line width=0.55pt, fill=black!25]
  (L1A)--(L1B)--(L1C)--(L1D)--(L1E)--(L1F)--cycle;

\filldraw[draw, line width=0.55pt, fill=black!15]
  (L2A)--(L2B)--(L2C)--(L2D)--(L2E)--(L2F)--cycle;

\filldraw[draw, line width=0.55pt, fill=black!7]
  (L3A)--(L3B)--(L3C)--(L3D)--cycle;

\coordinate (aT)   at (0.92,0.90,\zThr);
\coordinate (aTp)  at (0.82,0.75,\zThr);
\coordinate (aTpp) at (0.76,0.50,\zTwo);

\node[pt] at (aT)   {};
\node[pt] at (aTp)  {};
\node[pt] at (aTpp) {};

\node[lab, anchor=east] at (aT) {$a(t)$};
\node[lab, anchor=west] at (aTp) {$a(t')$};
\node[lab, anchor=south] at (aTpp) {$a(t'')$};

\draw[axis3d] (0,0,\zOne) -- (1.12,0,\zOne) node[anchor=west] {accuracy};
\draw[axis3d] (0,0,\zOne) -- (0,1.12,\zOne) node[anchor=south] {robustness};
\draw[axis3d] (0,0,\zOne) -- (0,0,3.45) node[anchor=east] {\(L_P(q_P)\)};

\node[lab] at (-0.15,0.9,\zOne) {level \(1\)};
\node[lab] at (0.45,1.15,\zTwo) {level \(2\)};
\node[lab] at (0.95,1.15,\zThr) {level \(3\)};

\end{tikzpicture}
\end{minipage}

\caption{Left: two normalized trustworthiness dimensions, here accuracy and robustness, evolve over time under gradual degradation and local improvement. Right: the trustworthiness-level function \(L_P\) maps quantified profiles \(q_P=(q_{P,\mathrm{acc}},q_{P,\mathrm{rob}})\) to trustworthiness levels \(L_P(q_P)\). In this example, level \(3\) is assigned on \([0.7,1]\times[0.7,1]\), level \(2\) on \([0.4,1]\times[0.4,1]\setminus[0.7,1]\times[0.7,1]\), and level \(1\) on the remaining part of \([0,1]\times[0,1]\). The states \(a(t)\) and \(a(t')\) lie on level \(3\), so they satisfy \(\tau_\diam(a(t))=\tau_\diam(a(t'))\). The state \(a(t'')\) lies on level \(2\), so the diachronic identity criterion at \(t\) and \(t''\) or \(t'\) and \(t''\) is not satisfied. This makes visually explicit the trustworthiness comparisons used in Ferrario's synchronic and diachronic identity criteria, prior to the categorical formalization of states and histories.}
\label{fig:time_series_and_levels}
\end{figure}

These criteria express identity relative to the abstraction fixed by \(\diam\): fixing \(\diam\) does not determine a unique AI system token and it leaves substantial room for variation within those constraints. Different copies of the same system template may be deployed in distinct but type-compatible contexts, pursue the same techno-function, satisfy the same high-level requirements documented, for example, in a common conformity assessment \citep{EU_AI_Act_2024}, and be evaluated by the same level function \(L_P\), while still receiving different quantified assessments \(q^a_P(t)\) and therefore possibly different trustworthiness levels. They may also differ in model versions, data pipelines, hardware and software configurations, interfaces, and deployment channels---for instance, one copy may be accessed through a web browser and another through a mobile application---and may undergo different MLOps interventions. Despite these changes, these instantiations may remain identical in the sense specified by Definition~\ref{def:ferrario_identity_criteria} if their trustworthiness levels coincide. 

Figure~\ref{fig:time_series_and_levels} illustrates how quantified trustworthiness measurements \(q_P^a(t)\) are mapped by the level function \(L_P\) to trustworthiness levels \(\tau_\diam(a(t))=L_P(q_P^a(t))\), which are the values compared in Ferrario's synchronic and diachronic AI identity criteria in Definition~\ref{def:ferrario_identity_criteria}. At this stage, the metaphysical status of \(a(t)\) and \(b(t)\) remains deliberately underdetermined. These symbols denote AI systems considered at particular times, but the criteria in Definition~\ref{def:ferrario_identity_criteria} do not yet specify what an AI system state is, which transformations may connect such states, how these transformations compose, or how histories of AI system states should be represented over time. Furthermore, the biconditionals in Definition~\ref{def:ferrario_identity_criteria} characterize \(=_\diam\) as a type-relative equivalence relation induced by equality of trustworthiness levels. They do not provide a structural account of the states being related, the admissible transformations connecting them, or the temporal organization of identity-preserving histories.
The category-theoretic construction developed in Sections~\ref{section:cat_AI_identity} and~\ref{section:AI_identity_functors} supplies this missing structure. It defines profile-relative states, admissible lifecycle transformations, trustworthiness-preserving reachability, temporally ordered histories, and time-synchronous comparisons between realized histories. As a result, the propositional criteria in Definition~\ref{def:ferrario_identity_criteria} can be recovered and enriched: the weak interpretation identifies systems through equality of trustworthiness levels, while the strong interpretation refines identity through transformation-grounded reachability and categorical isomorphisms.

\section{The Category \(\Sys_\diam\) of AI System States}
\label{section:cat_AI_identity}

\subsection{Abstract AI System States and Their Transformations}
\label{subsection:abstract_state_transf}
Fix an AI system type datum \(\diam=(F,P,L_P)\). We introduce the last two primitive elements of our formalization: $\diam$-relative AI system states and their transformations. We summarize them in Table~\ref{tab:formal_primitives}.

\begin{definition}[\(\diam\)-relative AI system state]
\label{def:AI_system_state}
A \emph{\(\diam\)-relative AI system state} is an element \(x=(q_P,k)\in S_\diam\), where
\[
S_\diam
=
\{(q_P,L_P(q_P))\mid q_P\in I_P\}
\subseteq I_P\times[K].
\]
For \(x=(q_P,k)\in S_\diam\), the vector \(Q_x:=q_P\in I_P\) is the \emph{quantified trustworthiness profile} of \(x\), and the value \(\tau_\diam(x):=k=L_P(q_P)\) is its \emph{trustworthiness level}.
\end{definition}

By construction, \(S_\diam\) is the graph of the level function \(L_P:I_P\to[K]\) specified as part of the type datum \(\diam\). A state of an AI system, in the sense used here, is therefore a $P$-relative numerical state in \(I_P\times[K]\). It is a governance-relative representation of an AI system, rather than a complete description of its computational, material, organizational, or environmental configuration.
 A single state may be instantiated by a multitude of AI systems: different copies, deployments, model implementations, hardware choices, or software stacks may determine the same vector \(q_P\) and the same level \(L_P(q_P)\). This construction follows from the \texttt{function}\textsuperscript{\tiny +}-trustworthiness account developed in \citet{ferrario2025trustworthiness}: identity is assessed at the level fixed by \(\diam\), rather than by hardware or software sameness. We now introduce transformations between such abstract states.

\begin{definition}[Primitive lifecycle transformation]
Let \(\mathcal U_\diam\) be the set of admissible primitive  lifecycle transformations for AI systems of type \(\diam\). Each \(r\in\mathcal U_\diam\) is associated with a relation \(R_r\subseteq S_\diam\times S_\diam\). For states \(x,y\in S_\diam\), the statement \((x,y)\in R_r\) means that \(y\) may result from \(x\) through an admissible primitive lifecycle transformation recorded as \(r\).
\end{definition}

The set \(\mathcal U_\diam\) is a vocabulary of state transformations that are admissible and primitive. 
Let us discuss them in some detail. First, each \(r\in\mathcal U_\diam\) is interpreted through a relation \(R_r\subseteq S_\diam\times S_\diam.\)
Thus, \((x,y)\in R_r\) means that \(y\) is an admissible possible outcome of applying the primitive lifecycle transformation recorded as \(r\) to a system
in state \(x\). This assertion concerns reachability at the level of abstract states only: it does not imply that \(r\) was actually executed in producing \(y\) from \(x\). Crucially, the associated relations \(R_r\) need \emph{not} preserve trustworthiness levels. Also note that distinct lifecycle paths may connect the same source and target states. For instance, one system may move from \(x\) to \(y\) by a \texttt{model training update}, followed by a \texttt{data validation update}, and then by a \texttt{monitoring configuration update}, while another system may reach the same state through the same three admissible transformations applied in a different order. We show this in Figure~\ref{fig:distinct_paths_same_endpoints}.

Furthermore, \emph{admissibility} of \(r\in\mathcal U_\diam\) is relative to the fixed type datum \(\diam\) and the lifecycle or governance regime under consideration. An intervention included in \(\mathcal U_\diam\) must remain compatible with the techno-function \(F\) and with the assessment framework fixed by \(P\), although it may change the quantified assessment \(q_P\) and may cross a trustworthiness-level boundary. The relation \(R_r\) records only the intervention's possible effect at the level of the abstract state space. It need not be a function: the same intervention may produce different assessment outcomes under different data or deployment conditions, and different interventions may lead to the same abstract state. Examples of elements of \(\mathcal U_\diam\) include \texttt{model retraining}, \texttt{fine-tuning}, \texttt{threshold adjustment}, \texttt{data refresh}, \texttt{model rollback}, and \texttt{documentation update}
\citep{kreuzberger2023machine,eken2025multivocal}. 

Additionally, the elements of \(\mathcal U_\diam\) are \emph{primitive} relative to the chosen granularity. For instance, a transformation such as \texttt{model retraining} may be treated as primitive in a coarse vocabulary \(\mathcal U_\diam\), while a more refined vocabulary \(\mathcal U'_\diam\) may decompose it, for instance, into \texttt{training-data augmentation}, \texttt{hyperparameter tuning}, and \texttt{change of model class}. Fixing \(\mathcal U_\diam\) therefore fixes the level of description at which lifecycle paths are represented in the path category constructed below. A remark on vocabulary refinement is useful before moving to the categorical constructions. If \(\mathcal U_\diam\) is refined, the resulting path categories record more detailed lifecycle provenance, that is, the lifecycle history by which an AI system state is produced, including the particular sequence of updates, interventions, configurations, measurements, and deployment conditions leading to that state. When a refined vocabulary extends a coarser one, the corresponding path categories are related by the inclusion of generators. In what follows, we fix one \(\mathcal U_\diam\) and do not study these refinements further.

\begin{figure}[ht]
\centering

\begin{tikzpicture}[
    scale=0.7,
    transform shape,
    >=Latex,
    state/.style={
        circle,
        draw=black,
        fill=black,
        minimum size=2.6mm,
        inner sep=0pt,
        line width=0.5pt
    },
    pathvertex/.style={
        circle,
        draw=black,
        fill=black,
        minimum size=1.2mm,
        inner sep=0pt,
        line width=0pt
    },
    morph/.style={
        draw=black,
        line width=1.1pt,
        -{Latex[length=2.8mm,width=2mm]},
        line join=round,
        line cap=round,
        shorten <=0pt,
        shorten >=0pt
    },
    labelstyle/.style={
        font=\small
    }
]

\coordinate (x) at (0,0);
\coordinate (y) at (10,0);

\coordinate (u1) at (3,2);   
\coordinate (u2) at (5,1);   

\coordinate (l1) at (2,-1);  
\coordinate (l2) at (7,-2);  

\node[state] (X) at (x) {};
\node[state] (Y) at (y) {};

\node[pathvertex] at (u1) {};
\node[pathvertex] at (u2) {};

\node[pathvertex] at (l1) {};
\node[pathvertex] at (l2) {};

\draw[morph] (x) -- (u1)
    node[pos=0.50, above left =0.5mm, labelstyle, align=left]
    {\texttt{model training}\\\texttt{update}};

\draw[morph] (u1) -- (u2)
    node[pos=0.40, above right=0mm, labelstyle, align=left]
    {\texttt{data validation}\\\texttt{update}};

\draw[morph] (u2) -- (y)
    node[pos=0.50, above=4mm, labelstyle, align=left]
    {\texttt{monitoring}\\\texttt{configuration update}};

\draw[morph] (x) -- (l1)
    node[pos=0.50, below left=1mm, labelstyle, align=left]
    {\texttt{data validation}\\\texttt{update}};

\draw[morph] (l1) -- (l2)
    node[pos=0.50, below=1.5mm, labelstyle, align=left]
    {\texttt{monitoring}\\\texttt{configuration update}};

\draw[morph] (l2) -- (y)
    node[pos=0.50, below right=1mm, labelstyle, align=left]
    {\texttt{model training}\\\texttt{update}};

\node[labelstyle, left=1mm and 0mm of X]
{$(q_P^{1},L_P(q_P^{1}))=x$};

\node[labelstyle, right=1mm and 0mm of Y]
{$y=(q_P^{2},L_P(q_P^{2}))$};

\end{tikzpicture}

\caption{Two distinct lifecycle paths connecting the same abstract states \(x\) and \(y\). The transformations act on different components of the quantified trustworthiness profile \(q_P\), for instance predictive performance, robustness, monitoring, or auditability. In the upper path, the system moves from \(x\) to \(y\) through a \texttt{model training update}, followed by a \texttt{data validation update}, and then a \texttt{monitoring configuration update}. In the lower path, the same three admissible primitive transformations are applied in a different order.}
\label{fig:distinct_paths_same_endpoints}
\end{figure}

A typical structure for the relations \(R_r\subseteq S_\diam\times S_\diam\) where \(r\in\mathcal U_\diam\) can be given through inequalities on
the quantified trustworthiness profiles. Let \(\pi_j:I_P\to[0,1]\) denote the projection onto the \(j\)-th profile coordinate, and assume that each coordinate is oriented so that larger values represent improvement. For a nonempty set of dimensions \(J\subseteq\{1,\ldots,n\}\), define
\[
R_J
:=
\left\{
(x,y)\in S_\diam\times S_\diam
\;\middle|\;
\pi_j(Q_x)\leq \pi_j(Q_y)
\text{ for every }j\in J
\right\},
\]
where \(R_\varnothing:=S_\diam\times S_\diam\). The coordinates outside \(J\) are left unconstrained. Thus, a primitive transformation \(r\in\mathcal U_\diam\) intended to weakly improve the dimensions in \(J\) may then be represented by \(R_r=R_J\), or more generally by a subrelation \(R_r\subseteq R_J\) when additional preconditions or outcome constraints are required. Relational composition represents the sequential application of primitive transformations. If a transformation constrained by \(J_1\) is followed by
one constrained by \(J_2\), then
\(
R_{J_2}\circ R_{J_1}=R_{J_2\cap J_1}.
\)

\subsection{Categories of Abstract AI System States}
\label{subsection:cat_abstract_AI_states}
Building on the abstract state space \(S_\diam\) and the primitive lifecycle transformations collected in \(\mathcal U_\diam\), we now organize admissible state transformations categorically. As a result, we obtain the thin category \(\Sys_\diam\), equivalently the preorder of trustworthiness-level-preserving reachability on \(S_\diam\). This category
provides the formal basis for the account of AI system identity developed in the remainder of the work.

\begin{definition}[Lifecycle path category]
\label{def:raw_path_category}
Fix a type datum \(\diam=(F,P,L_P)\). The \emph{lifecycle path category} \(\mathsf{Path}_\diam\) is generated as follows. Its objects are the profile-relative states, so
\(
\operatorname{Ob}(\mathsf{Path}_\diam)=S_\diam
\).
For every \(r\in\mathcal U_\diam\) and every pair \(x,y\in S_\diam\) satisfying \((x,y)\in R_r\), introduce a labelled arrow \(r_{x,y}:x\to y\). A morphism \(f:x\rightsquigarrow y\) is a finite path
\[
f
=
\bigl(
x=x_0\xrightarrow{r_1}x_1
\xrightarrow{r_2}\cdots
\xrightarrow{r_k}x_k=y
\bigr),
\]
where \(r_i\in\mathcal U_\diam\) and \((x_{i-1},x_i)\in R_{r_i}\) for every \(i=1,\dots,k\). The identity morphism \(\id_x:x\to x\) is the empty path at \(x\), and composition is concatenation of paths.
\end{definition}

\(\mathsf{Path}_\diam\) is a free path category. Its morphisms are sequences of admissible primitive transformations between abstract states. These transformations are labelled at the granularity level fixed by \(\mathcal U_\diam\). Furthermore, \(\mathsf{Path}_\diam\) preserves the lifecycle provenance represented by the chosen vocabulary as different paths between the same source and target remain distinct. It is abstract and does not assert that every path is realized by a deployed system or that its intermediate states occur in temporal order: considerations on realization and temporal admissibility are imposed only in Section~\ref{section:AI_identity_functors}. As \(\mathsf{Path}_\diam\) is a free path category, it has no non-identity isomorphisms: even when paths
\(f:x\rightsquigarrow y\) and
\(g:y\rightsquigarrow x\)
both exist, their composites are non-empty loops and are not equal to the empty identity paths. Thus, a path \(g\) representing an operational rollback may reconstruct a previous profile-relative state \(x\) without \emph{categorically} inverting the path \(f\) that produced the current state \(y\). In summary, \(\mathsf{Path}_\diam\) is too large for our categorical construction of AI identity. As a next step, let us introduce trustworthiness levels into our construction.

\begin{definition}[Trustworthiness-level-preserving path]
\label{def:tau_preserving_path}
Let
\(
f
=
\bigl(
x=x_0\xrightarrow{r_1}x_1
\xrightarrow{r_2}\cdots
\xrightarrow{r_k}x_k=y
\bigr)
\)
be a morphism in \(\mathsf{Path}_\diam\). The path \(f\) is \emph{trustworthiness-level-preserving} if
\(
\tau_\diam(x_0)
=
\tau_\diam(x_1)
=
\cdots
=
\tau_\diam(x_k).
\)
Equivalently, every state occurring along \(f\) has the level \(\tau_\diam(x)=\tau_\diam(y)\).
\end{definition}

Let \(\mathsf{Path}^{\tau}_\diam\) denote the wide subcategory of
\(\mathsf{Path}_\diam\) having the same objects and only
trustworthiness-level-preserving paths as morphisms. It is a category because
identity paths preserve trustworthiness level and concatenation preserves this
property. This pathwise condition is stronger than equality of endpoint
levels: a path that leaves a level fibre and later returns to it is excluded.

Level preservation does not require the states along a path to be equal. For
\(f=(x_0,\ldots,x_k)\), one may have \(Q_{x_i}\neq Q_{x_j}\), while
\(\tau_\diam(x_i)=\tau_\diam(x_j)\) for all \(i,j\). Thus, quantified
trustworthiness profiles may vary along a path while remaining within one
trustworthiness-level fibre.

\begin{definition}[Parallel-path equivalence]
\label{def:tau_path_equivalence}
For every \(x,y\in S_\diam\), declare all morphisms in
\(\operatorname{Hom}_{\mathsf{Path}^{\tau}_\diam}(x,y)\) equivalent. Thus,
for parallel level-preserving paths \(f,g:x\to y\), write
\[
f\sim_\tau g.
\]
This equivalence abstracts from their lengths, intermediate states,
transformation labels, and order.
\end{definition}

\begin{proposition}
\label{prop:tau_congruence}
The family of relations \(\sim_\tau\) is a categorical congruence on
\(\mathsf{Path}^{\tau}_\diam\).
\end{proposition}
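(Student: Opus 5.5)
The plan is to check directly the two requirements in the definition of quotient category. First, \(\sim_\tau\) must be an equivalence relation on each hom-set \(\operatorname{Hom}_{\mathsf{Path}^{\tau}_\diam}(x,y)\). Second, it must be compatible with composition. Since Definition~\ref{def:tau_path_equivalence} declares \emph{all} parallel level-preserving paths equivalent, \(\sim_\tau\) restricted to each hom-set is the total relation \(\operatorname{Hom}\times\operatorname{Hom}\). Both verifications should therefore reduce to observing that composites of parallel paths are again parallel and remain in \(\mathsf{Path}^{\tau}_\diam\).

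\textbf{Equivalence relation.} For fixed \(x,y\in S_\diam\), reflexivity, symmetry and transitivity are immediate for the total relation on a collection. One only needs to note that \(f\sim_\tau g\) is defined only when \(f\) and \(g\) are both level-preserving paths with the same source \(x\) and target \(y\), so the relation never relates arrows in different hom-sets. If a hom-set is empty, the relation is vacuously an equivalence.

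\textbf{Compatibility with composition.} Take \(f,f':x\rightsquigarrow y\) and \(g,g':y\rightsquigarrow z\) in \(\mathsf{Path}^{\tau}_\diam\) with \(f\sim_\tau f'\) and \(g\sim_\tau g'\). I would first recall, as already noted after Definition~\ref{def:tau_preserving_path}, that \(\mathsf{Path}^{\tau}_\diam\) is closed under concatenation. Every state along \(g\circ f\) has level \(\tau_\diam(x)=\tau_\diam(y)=\tau_\diam(z)\), so \(g\circ f\) and \(g'\circ f'\) are both morphisms \(x\rightsquigarrow z\) in \(\mathsf{Path}^{\tau}_\diam\). Being parallel level-preserving paths, they satisfy \(g\circ f\sim_\tau g'\circ f'\) by definition. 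The identity case needs no separate treatment, because empty paths lie in \(\mathsf{Path}^{\tau}_\diam\).

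There is no genuine obstacle here. The only point needing care is to ensure the composites stay inside the wide subcategory \(\mathsf{Path}^{\tau}_\diam\), rather than merely in \(\mathsf{Path}_\diam\), so that \(\sim_\tau\) is defined on them. This follows from the pathwise (not merely endpoint) nature of level preservation combined with the shared middle object \(y\). I would close by remarking that the quotient \(\mathsf{Path}^{\tau}_\diam/{\sim_\tau}\) is therefore well defined and thin, since each hom-set collapses to at most one class.
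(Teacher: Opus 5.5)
Your proposal is correct and follows the paper's own argument: \(\sim_\tau\) is the universal (total) equivalence relation on each hom-set of \(\mathsf{Path}^{\tau}_\diam\), and composites of equivalent morphisms are again parallel morphisms of \(\mathsf{Path}^{\tau}_\diam\), hence equivalent. Your check that concatenation stays inside \(\mathsf{Path}^{\tau}_\diam\) is the one detail the paper takes for granted, having noted it just after introducing that subcategory.
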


\begin{proof}
Each \(\sim_\tau\) is the universal equivalence relation on the corresponding hom-set. Moreover, composites of equivalent parallel morphisms are again parallel and therefore equivalent.
\end{proof}

The following category is key for this work.

\begin{definition}[AI system state category]
\label{def:AI_state_category}
The \emph{AI system state category} of type \(\diam\) is the quotient category
\[
\Sys_\diam
:=
\mathsf{Path}^{\tau}_\diam/{\sim_\tau}.
\]
Its objects are the states in \(S_\diam\). A morphism from \(x\) to \(y\) in \(\Sys_\diam\) is an equivalence class
\(
[f]_\tau
\in
\operatorname{Hom}_{\Sys_\diam}(x,y)
\)
of trustworthiness-level-preserving  paths \(f:x\rightsquigarrow y\) in \(\mathsf{Path}^{\tau}_\diam\). Identities and composition are given by
\[
\id_x=[\id_x]_\tau,
\qquad
[g]_\tau\circ[f]_\tau
=
[g\circ f]_\tau.
\]
\end{definition}

Let us discuss \(\Sys_\diam\) in some detail. 

\textbf{The thinness of \(\Sys_\diam\), preorders, and trustworthiness-preserving reachability.} Let \(\preceq_\tau\) be the relation on \(S_\diam\):

\begin{definition}[Trustworthiness-preserving reachability preorder]
\label{def:tau_reachability_preorder}
For \(x,y\in S_\diam\), define
\[
x\preceq_\tau y
\quad\Longleftrightarrow\quad
\text{there exists a trustworthiness-level-preserving path }
f:x\rightsquigarrow y
\text{ in }\mathsf{Path}^{\tau}_\diam.
\]
The relation \(\preceq_\tau\) is reflexive, since every identity path preserves its trustworthiness level, and transitive, since the concatenation of two composable trustworthiness-level-preserving paths is again trustworthiness-level-preserving. Hence, \((S_\diam,\preceq_\tau)\) is a preorder.
\end{definition}

\begin{proposition}[Trustworthiness-preserving reachability] \label{prop:Sys_reachability} 
For every \(x,y\in S_\diam\), \( \Hom_{\Sys_\diam}(x,y)\neq\varnothing \Longleftrightarrow x\preceq_\tau y. \) 
Furthermore, whenever this hom-set is nonempty, it contains exactly one morphism. Consequently, \(\Sys_\diam\) is the thin category associated with the trustworthiness-preserving reachability preorder \((S_\diam,\preceq_\tau)\). 
\end{proposition}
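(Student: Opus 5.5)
The plan is to unwind the quotient construction of Definition~\ref{def:AI_state_category} and compare it directly with Definition~\ref{def:tau_reachability_preorder}. The argument has three steps: the biconditional on nonemptiness, the uniqueness of morphisms, and the identification with the thin category \(\mathcal C_{(S_\diam,\preceq_\tau)}\) from Section~\ref{section:categorical_prelim}.

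\textbf{Nonemptiness.} By the definition of the quotient category, \(\Hom_{\Sys_\diam}(x,y)=\Hom_{\mathsf{Path}^{\tau}_\diam}(x,y)/{\sim_\tau}\). A quotient of a collection is nonempty exactly when the collection is. So \(\Hom_{\Sys_\diam}(x,y)\neq\varnothing\) if and only if there is a trustworthiness-level-preserving path \(f:x\rightsquigarrow y\) in \(\mathsf{Path}^{\tau}_\diam\). By Definition~\ref{def:tau_reachability_preorder}, this is precisely \(x\preceq_\tau y\).

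\textbf{Uniqueness.} Suppose \(f,g\) are any two elements of \(\Hom_{\mathsf{Path}^{\tau}_\diam}(x,y)\). Definition~\ref{def:tau_path_equivalence} declares them equivalent, since \(\sim_\tau\) is the universal relation on each hom-set, as noted in the proof of Proposition~\ref{prop:tau_congruence}. Hence \([f]_\tau=[g]_\tau\), and every nonempty hom-set of \(\Sys_\diam\) is a singleton. This makes \(\Sys_\diam\) thin.

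\textbf{Identification with the preorder category.} I would define a functor \(\Phi:\Sys_\diam\to\mathcal C_{(S_\diam,\preceq_\tau)}\) that is the identity on objects and sends the unique morphism \(x\to y\) to \(\star_{x,y}\). By the first two steps, \(\Phi\) is well defined and bijective on each hom-set. It preserves identities and composition automatically, because both categories are thin (Proposition~\ref{prop:thin_categories_commute}): any two parallel composites agree. Therefore \(\Phi\) is an isomorphism of categories.

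There is no real obstacle here. The only point needing care is that the quotient exists at all, which Proposition~\ref{prop:tau_congruence} already guarantees. One should also note that nonemptiness is witnessed inside \(\mathsf{Path}^{\tau}_\diam\), not merely \(\mathsf{Path}_\diam\). This is exactly how \(\preceq_\tau\) was defined, so the pathwise level-preservation condition enters consistently on both sides.
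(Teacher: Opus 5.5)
Your proposal is correct and follows the paper's proof: nonemptiness of the quotient hom-set reduces to the existence of a level-preserving path, which is exactly \(x\preceq_\tau y\), and uniqueness follows because \(\sim_\tau\) is the universal relation on each hom-set. Your explicit identity-on-objects isomorphism \(\Phi\) onto \(\mathcal C_{(S_\diam,\preceq_\tau)}\) simply spells out the ``consequently'' clause that the paper leaves implicit.
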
 
\begin{proof} 
By construction, a morphism from \(x\) to \(y\) in \(\Sys_\diam\) is an equivalence class of trustworthiness-level-preserving paths from \(x\) to \(y\). Hence such a morphism exists exactly when \(x\preceq_\tau y\). Any two parallel level-preserving paths are identified by \(\sim_\tau\), so the resulting morphism is unique. 
\end{proof}

Proposition~\ref{prop:Sys_reachability} shows that the category \(\Sys_\diam\) represents the abstract existence of trustworthiness-level-preserving reachability between profile-relative states in \(S_\diam\). It does not retain the particular sequence of primitive transformations by which one state is reached from another; that information
is represented in \(\mathsf{Path}^{\tau}_\diam\). Instead,
\(\operatorname{Hom}_{\Sys_\diam}(x,y)\) is nonempty precisely when at least one trustworthiness-level-preserving path from \(x\) to \(y\) exists. Consequently, a morphism in \(\Sys_\diam\) does not specify the length of a
representative path, the number or types of transformations occurring in it, their order, their temporal ordering, or any causal relation among them. For instance, suppose that both \(x\xrightarrow{\texttt{retraining}}y\) and
\( x\xrightarrow{\texttt{documentation update}}z \xrightarrow{\texttt{retraining}}y
\)
preserve a common trustworthiness level at every step. These are distinct morphisms in \(\mathsf{Path}^{\tau}_\diam\), since they have different lengths, intermediate states, and transformation labels. In \(\Sys_\diam\), however, they belong to the same equivalence class and give
rise only to the unique morphism
\(\star_{x,y}\in\operatorname{Hom}_{\Sys_\diam}(x,y).\)
Accordingly, the preorder \((S_\diam,\preceq_\tau)\) encodes the abstract notion of trustworthiness-preserving reachability among \(\diam\)-relative states relationally.

\paragraph{Why is \(\Sys_\diam\) a quotient category?}
The quotient construction separates two operations that play different conceptual roles. Passing from \(\mathsf{Path}_\diam\) to \(\mathsf{Path}^{\tau}_\diam\) restricts admissible reachability to paths that remain within one trustworthiness-level fibre. Passing from
\(\mathsf{Path}^{\tau}_\diam\) to \(\Sys_\diam\) then identifies all parallel level-preserving paths, thereby removing their lifecycle provenance and retaining only the existence of trustworthiness-preserving reachability. Equivalently, although in a less explicit way, \(\Sys_\diam\) could have been defined directly as the thin category associated with the preorder \((S_\diam,\preceq_\tau)\) of Definition~\ref{def:tau_reachability_preorder}.  The categories \(\mathsf{Path}_\diam\), \(\mathsf{Path}^{\tau}_\diam\), and \(\Sys_\diam\) are summarized in the upper part of  Table~\ref{tab:categorical_architecture}.

\paragraph{Thinness and isomorphism in \(\Sys_\diam\).}
For \(x,y\in S_\diam\), one has \(x\cong y\) in \(\Sys_\diam\) if and only if \(x\preceq_\tau y\) and \(y\preceq_\tau x\). Indeed, these relations determine unique morphisms \(\star_{x,y}\in\operatorname{Hom}_{\Sys_\diam}(x,y)\) and \(\star_{y,x}\in\operatorname{Hom}_{\Sys_\diam}(y,x)\). Their composites are endomorphisms of \(x\) and \(y\), respectively, and thinness implies \(\star_{y,x}\circ\star_{x,y}=\id_x\) and \(\star_{x,y}\circ\star_{y,x}=\id_y\), since each endomorphism set contains only the identity morphism. Conversely, any isomorphism provides morphisms in both directions and therefore bidirectional trustworthiness-preserving reachability. Such an isomorphism expresses only mutual trustworthiness-level-preserving reachability between the two abstract states, not the reversal of a concrete lifecycle process.

\begin{table}[ht]
\centering
\footnotesize
\renewcommand{\arraystretch}{1.25}
\begin{tabularx}{\textwidth}{
    >{\raggedright\arraybackslash}p{0.17\textwidth}
    >{\raggedright\arraybackslash}X
    >{\raggedright\arraybackslash}X
    >{\raggedright\arraybackslash}X
}
\toprule
\textbf{Category} &
\textbf{Objects} &
\textbf{Morphisms} &
\textbf{Description and role}
\\
\midrule

\(\mathsf{Path}_\diam\) &
All abstract \(\diam\)-relative states \(x=(q_P,L_P(q_P))\). &
Finite paths of primitive admissible lifecycle transformations. &
Retains lifecycle provenance independently of realization and temporal order.
\\

\(\mathsf{Path}^{\tau}_\diam\) &
The same abstract states as \(\mathsf{Path}_\diam\). &
Paths whose states all have one common trustworthiness level. &
Restricts transformations to those compatible with trustworthiness-level invariance.
\\

\(\Sys_\diam
=
\mathsf{Path}^{\tau}_\diam/{\sim_\tau}\) &
All \(\diam\)-relative states. &
Equivalence classes of parallel level-preserving paths; each nonempty hom-set is a singleton. &
Forgets lifecycle provenance and retains abstract trustworthiness-preserving reachability. This reachability is independent of realization, time, and causal order. Equivalently defined as the thin category associated to the preorder \((S_\diam,\preceq_\tau)\).  \\
\midrule
\(\Time_{[t_0,T]}\) &
Times \(t\in[t_0,T]\). &
A unique arrow \(t\to t'\) whenever \(t\leq t'\). &
Provides the shared temporal order for realized histories and same-time comparisons.
\\
\([\Time_{[t_0,T]},\Sys_\diam]\) &
All functors
\(\Time_{[t_0,T]}\to\Sys_\diam\). &
Natural transformations between such functors. &
Provides the ambient category of possible finite histories, whether realized or merely theoretical.
\\
\(\Traj_{\diam,[t_0,T]}\) &
Temporally admissible AI system histories instantiated over $[t_0,T]$ by at least one deployed AI system token. &
Natural transformations whose components admit time-synchronous representatives through states realized at the relevant time. &
Represents realized AI system evolutions and their coherent same-time comparisons.
\\

\bottomrule
\end{tabularx}
\caption{Main categories of the formalization. The construction moves from abstract lifecycle paths and trustworthiness-preserving reachability to time-indexed histories and realized AI system histories.}
\label{tab:categorical_architecture}
\end{table}

\section{AI system histories as Functors: The Category \(\Traj_{\diam,[t_0,T]}\)}
\label{section:AI_identity_functors}

\subsection{Time as a Category}
\label{subsection:time_cat}
As deployed AI system tokens evolve over time, it is necessary to embed time into our categorical approach to AI identity. To do so, we represent time as a poset category as follows.

\begin{definition}[Time category]
Fix a deployment time \(t_0\in\mathbb R\). Let \(\Time_{t_0}\) be the poset category induced by the total order on \([t_0,\infty)\). Its objects are time points, \(\operatorname{Ob}(\Time_{t_0})=[t_0,\infty)\), and its morphisms are given by
\[
\Hom_{\Time_{t_0}}(t,t')=
\begin{cases}
\{\star_{t,t'}\} & \text{if } t\leq t',\\
\varnothing & \text{otherwise.}
\end{cases}
\]
Composition is induced by transitivity of \(\leq\).
\end{definition}

A morphism \(\star_{t,t'}\) expresses that \(t'\) is not earlier than \(t\). \(\Time_{t_0}\) is thin. 

\begin{definition}[Time-shift functor]
Let \(\Time_0\) be the time category with objects \([0,\infty)\). For each deployment time \(t_0\), define the time-shift functor \(s_{t_0}:\Time_0\to\Time_{t_0}\) by \(s_{t_0}(t)=t_0+t\). If \(A:\Time_{t_0}\to\mathcal C\) is a functor, its elapsed-time reparameterization is \(\widetilde A=A\circ s_{t_0}:\Time_0\to\mathcal C\).
\end{definition}

This allows copies or related systems deployed at different calendar times to be compared by time since deployment. In what follows, \(\Time\) denotes either an absolute time category \(\Time_{t_0}\) or the elapsed-time category \(\Time_0\), depending on the comparison at issue.

\subsection{Time-Relative Histories of AI Systems as Functors}
\label{subsection:AI_sys_functor}
The central idea is that each (post-deployment) history of AI systems---understood here as the trajectory through time of AI systems that continue to function correctly, i.e., preserve their trustworthiness level over time---can be represented as a time-order-preserving functor from the time category \(\mathsf T_{t_0}\) to the state category \(\Sys_\diam\). Such histories include, for instance, those of deployed copies of a medical AI system used in a hospital emergency department. This functor assigns a profile-relative state to each time and the unique trustworthiness-preserving reachability morphism to each ordered pair of times. Since \(\Sys_\diam\) is defined independently of temporal order, temporal admissibility additionally requires every assigned morphism to admit at least one representative compatible with the temporal evolution encoded by the functor. Let us elaborate on this construction in a few steps.

\paragraph{From abstract states to time-ordered AI system states and their transformations.}
A morphism in \(\Sys_\diam\) records only the existence of
trustworthiness-level-preserving reachability between two abstract states.
It may be represented by several paths in
\(\mathsf{Path}^{\tau}_\diam\), not all of which need to respect the temporal order of a history. To represent an AI system lifecycle, we therefore require that every morphism assigned to a temporal interval admit at least one representative whose intermediate states occur in the image of the history functor in non-decreasing temporal order.

Let
\(\mathsf A_\diam:\Time_{t_0}\to\Sys_\diam\)
be a functor. For every \(t\geq t_0\), write
\[
\mathsf A_\diam(t)
=
\bigl(
Q_P^{\mathsf A}(t),
\tau_\diam^{\mathsf A}(t)
\bigr)
\in S_\diam,
\]
where
\(
Q_P^{\mathsf A}(t)
=
\bigl(
q_{P1}^{\mathsf A}(t),\ldots,q_{Pn}^{\mathsf A}(t)
\bigr)
\in I_P
\)
is the quantified trustworthiness profile assigned at time \(t\), and
\(
\tau_\diam^{\mathsf A}(t)
=
L_P\bigl(Q_P^{\mathsf A}(t)\bigr)
\)
is its trustworthiness level. For \(t\leq t'\), denote by
\(
\mathsf A_{t,t'}
:=
\mathsf A_\diam(\star_{t,t'})
\in\Hom_{\Sys_\diam}(\mathsf A_\diam(t),\mathsf A_\diam(t'))
\)
the morphism assigned to the unique time arrow \(\star_{t,t'}\in\Hom_{\Time_{t_0}}(t,t')\) by functoriality of \(\mathsf A_\diam\).

\begin{definition}[\(\mathsf A_\diam\)-time-ordered representative]
\label{def:time_ordered_representative}
Let
\(\mathsf A_\diam:\Time_{t_0}\to\Sys_\diam\)
be a functor and let \(t\leq t'\). A path
\[
f=
\bigl(
\mathsf A_\diam(t)=x_0
\xrightarrow{r_1}x_1
\xrightarrow{r_2}\cdots
\xrightarrow{r_m}x_m
=\mathsf A_\diam(t')
\bigr)
\]
in \(\mathsf{Path}^{\tau}_\diam\) representing the morphism
\(\mathsf A_{t,t'}\)
is called an
\emph{\(\mathsf A_\diam\)-time-ordered representative over \([t,t']\)}
if either:
\begin{enumerate}[label=(\alph*),nosep]
    \item \(m=0\) and
    \(\mathsf A_\diam(s)=\mathsf A_\diam(t)\)
    for every \(s\in[t,t']\); or
    \item \(m\geq1\) and there exist times
    \(t=s_0\leq s_1\leq\cdots\leq s_m=t'\)
    such that
    \(x_i=\mathsf A_\diam(s_i)\)
    for every \(i=0,\ldots,m\).
\end{enumerate}
\end{definition}

Clause~(a) represents stationary persistence over a non-degenerate interval: time may pass while the functor assigns the same profile-relative state. In the non-stationary case, every state occurring in the representative path must be assigned by the history at some time in the interval, in an order compatible with that of \(\Time_{t_0}\). Since the representative belongs to \(\mathsf{Path}^{\tau}_\diam\), trustworthiness-level preservation is already built into the construction. That said, time ordering does not turn the relations \(R_{r_i}\) into claims that the corresponding interventions were historically executed: the labels \(r_i\) identify admissible transformations witnessing a temporally coherent path, not certified causal provenance. Time-ordered representatives are stable under concatenation. In fact, if
\(t\leq t'\leq t''\), concatenating time-ordered representatives over \([t,t']\) and \([t',t'']\) yields a time-ordered representative over \([t,t'']\). Functoriality ensures that the resulting path represents the
composite morphism \(\mathsf A_{t',t''}\circ\mathsf A_{t,t'}=\mathsf A_{t,t''}\). We arrive at a key definition for this work.

\begin{definition}[AI system history]
\label{def:AI_history}
Fix an AI system type datum \(\diam=(F,P,L_P)\) and a deployment time \(t_0\). An \emph{AI system history of type \(\diam\)} is a functor
\[
\mathsf A_\diam:\Time_{t_0}\to\Sys_\diam
\]
such that, for every \(t\leq t'\), the morphism
\[
\mathsf A_{t,t'}
\in\Hom_{\Sys_\diam}(\mathsf A_\diam(t),\mathsf A_\diam(t'))
\]
admits at least one \(\mathsf A_\diam\)-time-ordered representative over \([t,t']\).
\end{definition}

By Definition~\ref{def:AI_history}, an AI system history assigns an abstract state to every time and encodes time-ordered, trustworthiness-preserving reachability to every ordered pair of states. We represent them in Figure~\ref{fig:AI_history_equiv_class}.

\textbf{Time-relative states and AI system tokens.}
For each \(t\geq t_0\), the object \(\mathsf A_\diam(t)\) is the \(\diam\)-relative state assigned by the history at time \(t\). It records the quantified trustworthiness profile \(Q_P^{\mathsf A}(t)\) and the corresponding level \(\tau_\diam^{\mathsf A}(t)\). At this stage, the history
need not be instantiated by any concrete AI system token. We will discuss realization by deployed tokens in Section~\ref{subsection:realized_AI_functors}.

\textbf{Morphisms and temporal admissibility.}
For every \(t\leq t'\), the unique morphism \(\mathsf A_{t,t'}\) records trustworthiness-level-preserving reachability between the two states. It may be represented by several paths in \(\mathsf{Path}^{\tau}_\diam\) but, because \(\Sys_\diam\) is defined independently of time, not every such path need respect the temporal order of the history. Definition~\ref{def:AI_history} therefore requires only that the morphism admits at least one \(\mathsf A_\diam\)-time-ordered representative. However, \(\mathsf A_\diam\) does not select any particular lifecycle path; it records morphisms between temporally-ordered AI system states together with the existence of at least one temporally coherent representative.

\textbf{Functoriality and trustworthiness-level invariance.}
The category \(\Time_{t_0}\) contains a unique arrow \(\star_{t,t'}\) for every \(t\leq t'\). Functoriality therefore requires the existence of a morphism \(\mathsf A_{t,t'}\) in \(\Sys_\diam\) for every ordered pair of times. Since every morphism in \(\Sys_\diam\) preserves trustworthiness level, it follows that \(\tau^{\mathsf A}_\diam(t)=\tau^{\mathsf A}_\diam(t')\) for every \(t\leq t'\). Consequently, \(\tau^{\mathsf A}_\diam(t)=\tau^{\mathsf A}_\diam(t')\) for all \(t,t'\geq t_0\), and the entire image of \(\mathsf A_\diam\) lies within a single trustworthiness-level fibre \(L_P^{-1}(k)\times\{k\}\subseteq S_\diam\) for some \(k\in[K]\).
Therefore, states with different trustworthiness levels are objects of \(\Sys_\diam\), but they cannot occur in the image of the same history functor. Indeed, no morphism in \(\Sys_\diam\) can connect them---see Figure~\ref{fig:AI_history_equiv_class}. A history functor therefore represents an uninterrupted trustworthiness-preserving evolution of AI states. A change of trustworthiness level marks the end of one such history and, where appropriate, the beginning of another.

\textbf{Non-uniqueness of histories through a state.}
The same state \(x\in S_\diam\) may occur in the images of many distinct history functors. Such functors may describe different past or future evolutions of the quantified assessment \(Q_P^{\mathsf A}(t)\), and their morphisms may admit different lifecycle representatives. Nevertheless, whenever \(x\) lies in the image of a history functor, the entire image of that functor remains within the trustworthiness-level fibre containing \(x\). Thus, distinct histories may pass through the same state while continuing through different quantified states, provided that all of them retain the same value of \(\tau_\diam\). 

The above considerations show that an AI system history is a time-indexed family of profile-relative states contained within one trustworthiness-level fibre, together with the unique reachability morphisms connecting every temporally ordered pair of those states and the requirement that each such morphism admit a time-ordered representative. In this way, the functor formalizes the persistence criterion proposed by \citet{ferrario2025trustworthiness}: quantified assessments and concrete implementations may change, while the degree of correct functioning encoded by the trustworthiness level remains invariant. We collect these functors in a category before returning to the study of AI identity criteria.

\begin{figure}[ht]
\centering

\begin{tikzpicture}[
    scale=0.90,
    transform shape,
    >=Latex,
    state/.style={
        circle,
        draw=black,
        fill=black,
        minimum size=1.7mm,
        inner sep=0pt,
        line width=0.5pt
    },
    pathvertex/.style={
        circle,
        draw=black,
        fill=black,
        minimum size=1.4mm,
        inner sep=0pt,
        line width=0.5pt
    },
    guide/.style={
        draw=black,
        dashed,
        line width=0.45pt
    },
    timeaxis/.style={
        draw=black,
        line width=0.8pt,
        -{Latex[length=2.8mm,width=2mm]}
    },
    levelplane/.style={
        draw=black,
        fill=black!8,
        line width=0.75pt
    },
    labelstyle/.style={
        font=\small
    },
    assignment/.style={
        draw=black,
        line width=0.65pt,
        -{Latex[length=2.5mm,width=1.8mm]},
        shorten >=4pt
    },
    pathsegA/.style={
        draw=black,
        line width=1.0pt,
        -{Latex[length=2mm,width=1mm]},
        line join=round,
        line cap=round,
        shorten <=0pt,
        shorten >=0pt
    },
    pathsegB/.style={
        draw=black,
        line width=1.0pt,
        -{Latex[length=2mm,width=1mm]},
        line join=round,
        line cap=round,
        shorten <=0pt,
        shorten >=0pt
    },
    sysmorphism/.style={
        draw=black,
        line width=2.2pt,
        line cap=round
    }
]

\coordinate (O) at (0.8,0.4);
\draw[timeaxis] (O) -- (11.9,0.4);

\coordinate (t)   at (4.0,0.4);
\coordinate (t1)  at (5.10,0.4);
\coordinate (t2)  at (5.75,0.4);
\coordinate (t3)  at (7.10,0.4);
\coordinate (tp)  at (8.00,0.4);
\coordinate (tpp) at (10.95,0.4);

\coordinate (aT)   at (4.0,3.45);
\coordinate (aTp)  at (8.0,3.45);
\coordinate (cTpp) at (10.95,5.15);

\coordinate (m1) at (5.10,2.90);
\coordinate (m2) at (5.75,3.20);
\coordinate (m3) at (7.10,2.75);

\coordinate (u1) at (4.75,3.75);
\coordinate (u2) at (5.05,4.45);
\coordinate (u3) at (6.35,4.45);
\coordinate (u4) at (5.60,3.90);
\coordinate (u5) at (8.25,4.95);

\coordinate (PLB) at (3.10,2.35);
\coordinate (PLT) at (4.50,5.10);
\coordinate (PRT) at (9.60,5.10);
\coordinate (PRB) at (8.20,2.35);

\filldraw[levelplane]
    (PLB) -- (PLT) -- (PRT) -- (PRB) -- cycle;

\node[labelstyle] at (5.00,5.30)
    {$\tau_\diam=k$};

\coordinate (aTSys)   at (4.0,7.10);
\coordinate (aTpSys)  at (8.0,7.45);

\coordinate (cTppSys) at ($(cTpp)+(0,2.20)$);

\node[labelstyle,anchor=east] at (1.00,7.50)
    {$\Sys_\diam$};

\node[labelstyle,anchor=east] at (1.00,4.50)
    {$\mathsf{Path}^{\tau}_\diam$};

\draw[guide] (t)   -- (aTSys);
\draw[guide] (tp)  -- (aTpSys);
\draw[guide] (t1)  -- (5.10,2.95);
\draw[guide] (t2)  -- (5.75,3.25);
\draw[guide] (t3)  -- (7.10,2.85);

\draw[guide] (tpp) -- (cTppSys);



\draw[assignment]
    (t)
    to[out=155,in=205]
    node[pos=0.56,left=0.3mm,labelstyle]
    {$\mathsf A_\diam$}
    (aTSys);

\draw[assignment]
    (tp)
    to[out=25,in=-25]
    node[pos=0.56,right=0.3mm,labelstyle]
    {$\mathsf A_\diam$}
    (aTpSys);

\draw[assignment]
    (tpp)
    to[out=25,in=-25]
    node[pos=0.56,right=0.3mm,labelstyle]
    {$\mathsf C_\diam$}
    (cTppSys);


\coordinate (CLB) at ($(cTpp)+(-0.95,-0.40)$);  
\coordinate (CLT) at ($(cTpp)+(-0.55, 0.45)$);  
\coordinate (CRT) at ($(cTpp)+( 0.85, 0.45)$);  
\coordinate (CRB) at ($(cTpp)+( 0.45,-0.40)$);  

\filldraw[levelplane]
    (CLB) -- (CLT) -- (CRT) -- (CRB) -- cycle;

\node[labelstyle,above=1mm of $(CLT)!0.5!(CRT)$]
    {$\tau_\diam\neq k$};

\draw[pathsegA] (aT) -- (m1);
\draw[pathsegA] (m1) -- (m2);
\draw[pathsegA] (m2) -- (m3);
\draw[pathsegA] (m3) -- (aTp);

\draw[pathsegB] (aT) -- (u1);
\draw[pathsegB] (u1) -- (u2);
\draw[pathsegB] (u2) -- (u3);
\draw[pathsegB] (u3) -- (u4);
\draw[pathsegB] (u4) -- (u5);
\draw[pathsegB] (u5) -- (aTp);

\node[pathvertex] at (m1) {};
\node[pathvertex] at (m2) {};
\node[pathvertex] at (m3) {};

\node[pathvertex] at (u1) {};
\node[pathvertex] at (u2) {};
\node[pathvertex] at (u3) {};
\node[pathvertex] at (u4) {};
\node[pathvertex] at (u5) {};

\node[state] (AT)   at (aT)   {};
\node[state] (ATp)  at (aTp)  {};
\node[state] (CTpp) at (cTpp) {};

\node[labelstyle,above left=0mm and 0mm of AT]
    {$\mathsf A_\diam(t)$};

\node[labelstyle,above right=0mm and 0mm of ATp]
    {$\mathsf A_\diam(t')$};

\node[labelstyle, left=0mm and 0mm of CTpp]
    {$\mathsf C_\diam(t'')$};


\draw[sysmorphism]
    (aTSys) -- (aTpSys)
    node[midway,above=2.5mm,labelstyle]
    {$\star_{\mathsf A_\diam(t),\,\mathsf A_\diam(t')}$};

\node[state] (ATSys)   at (aTSys)   {};
\node[state] (ATpSys)  at (aTpSys)  {};
\node[state] (CTppSys) at (cTppSys) {};

\node[labelstyle,above left=0mm and 0mm of ATSys]
    {$\mathsf A_\diam(t)$};

\node[labelstyle,above right=0mm and 0mm of ATpSys]
    {$\mathsf A_\diam(t')$};

\node[labelstyle,above left=0mm and 0mm of CTppSys]
    {$\mathsf C_\diam(t'')$};


\node[labelstyle,below] at (t)   {$t$};
\node[labelstyle,below] at (tp)  {$t'$};
\node[labelstyle,below] at (tpp) {$t''$};

\end{tikzpicture}

\caption{The same trustworthiness-preserving reachability is represented at the path level and in the thin state category. In the lower \(\mathsf{Path}^{\tau}_\diam\) panel, two distinct paths connect \(\mathsf A_\diam(t)\) and \(\mathsf A_\diam(t')\) within the common
trustworthiness-level fibre \(\tau_\diam=k\). The lower path through the states on the displayed time guides is an
\(\mathsf A_\diam\)-time-ordered representative. The upper path is another representative of the same reachability morphism, but its intermediate states are not ordered by the displayed times. The two paths may also differ in
length, intermediate states, and transformation labels.
In the upper \(\Sys_\diam\) panel, the same endpoint states are connected by one thick segment representing the unique element \(\star_{\mathsf A_\diam(t),\mathsf A_\diam(t')}\) of \(\Hom_{\Sys_\diam}
(\mathsf A_\diam(t),\mathsf A_\diam(t'))\).
\(\mathsf C_\diam(t'')\) has \(\Sys_\diam\)-level copy  disconnected from the \(\mathsf A_\diam\)-component because it lies outside the trustworthiness-level fibre \(\tau_\diam=k\).}
\label{fig:AI_history_equiv_class}
\end{figure}

\subsection{The Category \(\Traj_{\diam,[t_0,T]}\)}
\label{subsection:realized_AI_functors}
Finally, we organize the collection of AI system history functors into a trajectory category to study AI identity. Our strategy goes as follows. First, as \(\Sys_\diam\) contains all theoretically possible \(\diam\)-relative states and abstract trustworthiness-level-preserving morphisms, we  restrict our attention to history functors realized by deployed AI system tokens up to a fixed observation time. Then, to later address synchronic AI identity criteria, we impose a synchronicity condition on natural transformations between realized AI system history functors. Fix an observation time \(T\geq t_0\), interpreted as the present time of the analysis. Let \(\Time_{[t_0,T]}\) be the full subcategory of \(\Time_{t_0}\) whose objects are the times \(t\) satisfying \(t_0\leq t\leq T\). In this subsection, an \emph{AI system history over \([t_0,T]\)} means a functor
\(
\mathsf A_\diam:\Time_{[t_0,T]}\to\Sys_\diam
\)
satisfying the temporal-admissibility condition of
Definition~\ref{def:AI_history} for every \(t\leq t'\) in \([t_0,T]\).

\textbf{Realized AI system histories}. 
For a deployed AI system token \(u\) existing throughout
\([t_0,T]\), let \(\sigma_u:[t_0,T]\to S_\diam\)
denote its induced profile-relative state assignment, defined by
\begin{equation*}
\sigma_u(t)
=
\bigl(
q_P^u(t),
L_P(q_P^u(t))
\bigr).
\end{equation*}

We arrive at a key definition:

\begin{definition}[Realized AI system history]
\label{def:realized_AI_history}
An AI system history functor
\(
\mathsf A_\diam:\Time_{[t_0,T]}\to\Sys_\diam
\)
over \([t_0,T]\) is a \emph{realized AI system history} if there exists at least one deployed AI system token \(u\), existing throughout \([t_0,T]\), such that
\begin{equation*}
\sigma_u(t)=\mathsf A_\diam(t)
\qquad
\text{for every }t\in[t_0,T].
\end{equation*}
In that case, \(u\) is said to \emph{instantiate}
\(\mathsf A_\diam\).
\end{definition}

Let \(\mathcal H^{\mathrm{real}}_{\diam,[t_0,T]}\) denote the collection of realized AI system history functors over \([t_0,T]\). Multiple AI system tokens may instantiate the same realized history whenever they occupy the same \(\diam\)-relative state at every time \(t\in[t_0,T]\).\footnote{Realization is understood here in a metaphysical rather than epistemic sense. Whether designers, auditors, or governance actors can determine that such an instantiation exists is a distinct epistemic and practical question. In particular, two deployed tokens may instantiate the same \(\diam\)-relative state only relative to the measurement conventions, normalization rules, and evidential standards fixed by \(P\).}
Since \(\Sys_\diam\) is thin, this common object assignment also determines the morphisms assigned by the functor. For each \(t\in[t_0,T]\), define
\[
S^{\mathrm{real}}_\diam(t)
:=
\left\{
\mathsf C_\diam(t)
\;\middle|\;
\mathsf C_\diam\in
\mathcal H^{\mathrm{real}}_{\diam,[t_0,T]}
\right\}
\subseteq S_\diam.
\]
A state belongs to \(S^{\mathrm{real}}_\diam(t)\) precisely when it is occupied at time \(t\) by at least one deployed token instantiating a realized AI system history.

\textbf{Time-synchronous representatives.}
Let
\(\mathsf A_\diam,\mathsf B_\diam
\in\mathcal H^{\mathrm{real}}_{\diam,[t_0,T]}\).
A categorical comparison between their states at time \(t\) should involve only states that are themselves realized at that same time. Thus, we arrive at:

\begin{definition}[Time-synchronous representative]
\label{def:time_synchronous_representative}
Let \(t\in[t_0,T]\), and let
\(h\in\Hom_{\Sys_\diam}
\bigl(\mathsf A_\diam(t),\mathsf B_\diam(t)\bigr).
\)
A path
\begin{equation*}
f=
\bigl(
\mathsf A_\diam(t)=x_0
\xrightarrow{r_1}x_1
\xrightarrow{r_2}\cdots
\xrightarrow{r_m}x_m
=\mathsf B_\diam(t)
\bigr)
\end{equation*}
in \(\mathsf{Path}^{\tau}_\diam\) representing \(h\) is called
\emph{time-synchronous at \(t\)} if
\begin{equation*}
x_i\in S^{\mathrm{real}}_\diam(t)
\qquad
\text{for every }i=0,\ldots,m.
\end{equation*}
\end{definition}

A time-synchronous representative may pass through states realized by AI system histories other than \(\mathsf A_\diam\) and \(\mathsf B_\diam\), but every intermediate state must be occupied at time \(t\) by at least one deployed token. It therefore defines a \emph{vertical} comparison between realized AI system histories. Importantly, the existence of such a representative does not mean that \(\mathsf B_\diam(t)\) is realized by applying a sequence of instantaneous transformations to \(\mathsf A_\diam(t)\). In fact, the representative is \emph{not} a realized lifecycle trajectory, but an abstract comparison path in \(\mathsf{Path}^{\tau}_\diam\) whose intermediate states are realized at the same time \(t\). Thus, time-synchronous representatives witness theoretical comparability between states occupied at the same time, not synchronic production of one state from another. We will show the existence of such representatives in some examples in Section~\ref{subsec:examples}.

The restriction to \(S^{\mathrm{real}}_\diam(t)\) prevents time-synchronous comparison between the states \(\mathsf  A_\diam(t)\) and \(\mathsf  B_\diam(t)\) from being mediated by merely possible states. Without this restriction, two realized histories \(\mathsf  A_\diam\) and \(\mathsf  B_\diam\) could be compared at time \(t\) through intermediate states that are admissible in the abstract state space \(S_\diam\), but not occupied by any deployed token of type \(\diam\) at that time. The resulting comparison would then be grounded in the abstract structure of \(\Sys_\diam\), rather than in the population of co-realized states. This would oversimplify synchronic comparison: it would treat mathematically admissible bridges through the abstract profile space as if they were available for comparing actually deployed systems. Requiring all states in a time-synchronous representative to lie in \(S^{\mathrm{real}}_\diam(t)\) ensures that vertical comparison remains a relation among states actually realized at the time of comparison, rather than a relation mediated by unoccupied points of the abstract state space.\footnote{For example, suppose \(P\) has four quantified dimensions and \(q_P\in[0,1]^4\). Then a tuple such as
\[
q_P=(0.813,0.742,0.901,0.615)
\]
determines an abstract state
\(
x=((0.813,0.742,0.901,0.615),L_P(0.813,0.742,0.901,0.615))
\in S_\diam.
\)
But it need not be the case that any deployed AI system token in the population under analysis occupies exactly this \(\diam\)-relative state at a given time \(t\). Cardinality considerations aside, the point is both practical and conceptual: even when quantified profiles are represented by floating-point values, the abstract state space \(S_\diam\) is too large, containing many admissible states that are not realized by any token at a given time.}

\vspace{1em}

We are now in the position to introduce the last category of this work. 

\begin{definition}[Category of realized AI system histories]
\label{def:AI_cat}
Fix the type datum \(\diam=(F,P,L_P)\) and an observation interval
\([t_0,T]\). The \emph{category of realized AI system histories of type
\(\diam\)}, denoted by
\[
\Traj_{\diam,[t_0,T]},
\]
has object collection
\[
\operatorname{Ob}\bigl(\Traj_{\diam,[t_0,T]}\bigr)
=
\mathcal H^{\mathrm{real}}_{\diam,[t_0,T]}.
\]

For realized histories
\(\mathsf A_\diam,\mathsf B_\diam\), define
\(
\Hom_{\Traj_{\diam,[t_0,T]}}
\bigl(\mathsf A_\diam,\mathsf B_\diam\bigr)
\)
to be the collection of natural transformations
\[
\eta:
\mathsf A_\diam\Rightarrow\mathsf B_\diam
\]
in
\(
[\Time_{[t_0,T]},\Sys_\diam]
\)
such that every component
\(
\eta_t\in\Hom_{\Sys_\diam}(\mathsf A_\diam(t),\mathsf B_\diam(t))
\)
admits at least one time-synchronous representative at \(t\). Such natural transformations are called \emph{time-synchronous}.
\end{definition}
Since a morphism in \(\Traj_{\diam,[t_0,T]}\) is a natural transformation, its components satisfy, for every \(t\leq t'\),
\begin{equation}
\mathsf B_{t,t'}\circ\eta_t
=
\eta_{t'}\circ\mathsf A_{t,t'}.
\label{eq:naturality}
\end{equation}
Because \(\Sys_\diam\) is thin, this equation holds automatically whenever all four morphisms exist.

As with the preceding categorical constructions, the objects of \(\Traj_{\diam,[t_0,T]}\) are realized AI system histories rather than individual AI system tokens. Distinct deployed tokens are represented by the same object whenever they instantiate the same time-indexed sequence of \(\diam\)-relative states. The morphisms compare realized histories pointwise through states realized at the same time. The following result establishes that these objects and morphisms form a category.

\begin{proposition}
\label{prop:Traj_category}
The category \(\Traj_{\diam,[t_0,T]}\) is a thin subcategory of \([\Time_{[t_0,T]},\Sys_\diam].\)
\end{proposition}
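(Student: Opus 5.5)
To prove Proposition~\ref{prop:Traj_category}, I would check the subcategory axioms inside the functor category \([\Time_{[t_0,T]},\Sys_\diam]\) and then derive thinness from the thinness of \(\Sys_\diam\) (Proposition~\ref{prop:Sys_reachability}).

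\textbf{Objects and morphisms.} By Definition~\ref{def:AI_cat}, \(\operatorname{Ob}(\Traj_{\diam,[t_0,T]})=\mathcal H^{\mathrm{real}}_{\diam,[t_0,T]}\) is a subcollection of the functors \(\Time_{[t_0,T]}\to\Sys_\diam\). Each hom-collection is a subcollection of the natural transformations between the same functors. So it remains only to show closure under identities and composition.

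\textbf{Identities.} For \(\mathsf A_\diam\in\mathcal H^{\mathrm{real}}_{\diam,[t_0,T]}\), the identity natural transformation has components \(\id_{\mathsf A_\diam(t)}=[\id_x]_\tau\), where \(x=\mathsf A_\diam(t)\). This component is represented by the empty path at \(x\). The only state on that path is \(x\) itself, and \(x=\mathsf A_\diam(t)\in S^{\mathrm{real}}_\diam(t)\) because \(\mathsf A_\diam\) is realized. The empty path is therefore time-synchronous at \(t\) in the sense of Definition~\ref{def:time_synchronous_representative}.

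\textbf{Composition.} Let \(\eta:\mathsf A_\diam\Rightarrow\mathsf B_\diam\) and \(\theta:\mathsf B_\diam\Rightarrow\mathsf C_\diam\) be time-synchronous. Their vertical composite \(\theta\circ\eta\) is natural, since it is composition in the functor category. Its component at \(t\) is \(\theta_t\circ\eta_t\).

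1. Choose time-synchronous representatives \(f\) of \(\eta_t\) and \(g\) of \(\theta_t\).
2. Their concatenation \(g\circ f\) lies in \(\mathsf{Path}^{\tau}_\diam\), because concatenation preserves level-preservation.
3. By the definition of composition in the quotient category \(\Sys_\diam\) (Definition~\ref{def:AI_state_category}), \(g\circ f\) represents \([g]_\tau\circ[f]_\tau=\theta_t\circ\eta_t\).
4. Every state on the concatenated path lies on \(f\) or on \(g\), hence in \(S^{\mathrm{real}}_\diam(t)\).

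So \(g\circ f\) is a time-synchronous representative of \((\theta\circ\eta)_t\). Associativity and unit laws are inherited from \([\Time_{[t_0,T]},\Sys_\diam]\).

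\textbf{Thinness.} Suppose \(\eta,\eta'\colon\mathsf A_\diam\Rightarrow\mathsf B_\diam\). For each \(t\), the components \(\eta_t\) and \(\eta'_t\) both lie in \(\Hom_{\Sys_\diam}(\mathsf A_\diam(t),\mathsf B_\diam(t))\), which has at most one element by Proposition~\ref{prop:Sys_reachability}. Hence \(\eta_t=\eta'_t\) for every \(t\). Since natural transformations are determined by their components, \(\eta=\eta'\).

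\textbf{Expected obstacle.} I do not expect a real obstacle. The only step needing care is the composition step, where one must check that the concatenated path genuinely represents the composite in the quotient. This follows directly from how composition is defined in \(\mathsf{Path}^{\tau}_\diam/{\sim_\tau}\).
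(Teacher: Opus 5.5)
Your proposal is correct and follows the paper's proof essentially step for step: identities are handled via the empty path at the realized state \(\mathsf A_\diam(t)\in S^{\mathrm{real}}_\diam(t)\), composition via concatenation of time-synchronous representatives, and thinness componentwise from the thinness of \(\Sys_\diam\). Your explicit check that the concatenated path represents \(\theta_t\circ\eta_t\) in the quotient \(\mathsf{Path}^{\tau}_\diam/{\sim_\tau}\) makes precise a step the paper leaves implicit.
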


\begin{proof}
For every realized history \(\mathsf A_\diam\), the component of its identity natural transformation at time \(t\) is
\(
\id_{\mathsf A_\diam(t)}.
\)
It is represented by the empty path at \(\mathsf A_\diam(t)\), which is
time-synchronous because
\(
\mathsf A_\diam(t)\in S^{\mathrm{real}}_\diam(t).
\)

Now let
\[
\eta:\mathsf A_\diam\Rightarrow\mathsf B_\diam,
\qquad
\theta:\mathsf B_\diam\Rightarrow\mathsf C_\diam
\]
be time-synchronous natural transformations. For each \(t\), concatenate a time-synchronous representative of \(\eta_t\) with one of \(\theta_t\). Every state of the resulting path belongs to \(S^{\mathrm{real}}_\diam(t)\), so it is a time-synchronous representative of
\(
(\theta\circ\eta)_t
=
\theta_t\circ\eta_t.
\)
Hence time-synchronous natural transformations are closed under identities and composition. Finally, \(\Sys_\diam\) is thin, so between two fixed functors there is at most one natural transformation. Therefore, \(\Traj_{\diam,[t_0,T]}\) is thin.
\end{proof}

\textbf{Morphisms and isomorphisms in \(\Traj_{\diam,[t_0,T]}\).}
The morphisms \(\mathsf A_{t,t'}\) and \(\mathsf B_{t,t'}\) encode the forward trustworthiness-preserving evolution of two realized histories between times \(t\) and \(t'\). A component \(\eta_t\in
\Hom_{\Sys_\diam}
(\mathsf A_\diam(t),\mathsf B_\diam(t))\)
compares their states at the same time \(t\). Equation~\eqref{eq:naturality} expresses the compatibility of these same-time comparisons with temporal evolution. 
Both composites in equation~\eqref{eq:naturality} belong to \(\Hom_{\Sys_\diam}(\mathsf A_\diam(t),\mathsf B_\diam(t'))\).
Since \(\Sys_\diam\) is thin, they necessarily coincide. 

By definition, a time-synchronous representative of \(\eta_t\) contains only states in \(S^{\mathrm{real}}_\diam(t)\). It therefore introduces no
state indexed by a different time into the comparison at \(t\). Any stronger requirement that the assessments \(Q_P^{\mathsf A}(t)\) and \(Q_P^{\mathsf B}(t)\) use only evidence available by time \(t\) must be incorporated into the evidential and measurement conditions fixed by the
trustworthiness profile \(P\). A morphism in \(\Traj_{\diam,[t_0,T]}\) is thus a coherent family of time-synchronous comparisons between realized AI system histories.
Coherence follows automatically from the thinness of \(\Sys_\diam\). Since \(\Traj_{\diam,[t_0,T]}\) is itself thin, two realized histories are isomorphic precisely when morphisms exist in both directions. Thus, \(\mathsf A_\diam
\cong_{\Traj_{\diam,[t_0,T]}} \mathsf B_\diam\)
if and only if \(\Hom_{\Traj_{\diam,[t_0,T]}}
(\mathsf A_\diam,\mathsf B_\diam)\neq\varnothing\)
and \(\Hom_{\Traj_{\diam,[t_0,T]}}
(\mathsf B_\diam,\mathsf A_\diam)\neq\varnothing\).
Equivalently, for every \(t\in[t_0,T]\), the corresponding comparison morphisms in both directions must exist in \(\Sys_\diam\) and admit time-synchronous representatives. Figure~\ref{fig:trustworthiness_level_drop}
illustrates these comparisons and their relation to
trustworthiness-level invariance.

\textbf{Categorical comparability requires a shared type.}
Realized AI system histories with different type data are not directly comparable within the same trajectory category. If
\(\diam=(F,P,L_P)\) and
\(\diam'=(F',P',L_{P'})\)
differ, then
\[
\mathsf A_\diam
\in\operatorname{Ob}\bigl(\Traj_{\diam,[t_0,T]}\bigr)
\qquad\text{and}\qquad
\mathsf B_{\diam'}
\in\operatorname{Ob}\bigl(\Traj_{\diam',[t_0,T]}\bigr)
\]
belong to different categories, and there is no default morphism between them. Philosophically, this expresses the claim that identity comparisons presuppose a shared techno-function, trustworthiness profile, and level function \citep{ferrario2025trustworthiness}. For example, an ICU-triage system and a consumer-credit system may both qualify as AI systems, but they do not share the type datum required for a direct identity comparison in this framework.

Table~\ref{tab:categorical_architecture} presents a summary of the time-relative categories \(\mathsf T_{t_0}\), \([\Time_{[t_0,T]},\Sys_\diam]\), and \(\Traj_{\diam,[t_0,T]}\) presented in this section.

\begin{figure}[ht]
\centering

\begin{tikzpicture}[
    scale=0.82,
    transform shape,
    >=Latex,
    state/.style={
        circle,
        draw=black,
        fill=black,
        minimum size=1.5mm,
        inner sep=0pt,
        line width=0.5pt
    },
    pathvertex/.style={
        circle,
        draw=black,
        fill=black,
        minimum size=1.5mm,
        inner sep=0pt,
        line width=0.5pt
    },
    guide/.style={
        draw=black,
        line width=0.45pt
    },
    plane/.style={
        draw=black,
        line width=0.65pt
    },
    ntcomp/.style={
        draw=black,
        dashed,
        line width=1.15pt,
        -{Latex[length=3mm,width=1.0mm]},
        line join=round,
        line cap=round,
        shorten <=0pt,
        shorten >=0pt
    },
    histpathA/.style={
        draw=black,
        line width=1.0pt,
        -{Latex[length=2mm,width=1mm]},
        line join=round,
        line cap=round,
        shorten <=0pt,
        shorten >=0pt
    },
    histpathB/.style={
        draw=black,
        line width=1.0pt,
        -{Latex[length=2mm,width=1mm]},
        line join=round,
        line cap=round,
        shorten <=0pt,
        shorten >=0pt
    },
    histpathC/.style={
        draw=black,
        line width=1.0pt,
        -{Latex[length=2mm,width=1mm]},
        line join=round,
        line cap=round,
        shorten <=0pt,
        shorten >=0pt
    },
    blocked/.style={
        draw=black,
        dashed,
        line width=0.5pt,
        line cap=round
    },
    labelstyle/.style={
        font=\small
    }
]



\coordinate (P1) at (-1.20,1.55);
\coordinate (P2) at (3.20,5.95);
\coordinate (P3) at (19.10,5.95);
\coordinate (P4) at (14.70,1.55);

\draw[plane] (P1) -- (P2) -- (P3) -- (P4) -- cycle;

\coordinate (Q1) at (8.45,-0.80);
\coordinate (Q2) at (10.75,1.40);
\coordinate (Q3) at (18.10,1.40);
\coordinate (Q4) at (15.80,-0.80);

\draw[plane] (Q1) -- (Q2) -- (Q3) -- (Q4) -- cycle;

\node[labelstyle] at (3.70,6.15)
    {$\tau_\diam=k$};

\node[labelstyle] at (09.35,-1.00)
    {$\tau_\diam=\ell<k$};


\coordinate (aT) at (1.70,2.65);
\coordinate (bT) at (3.00,3.95);

\coordinate (aTp) at (8.00,2.75);
\coordinate (bTp) at (9.30,4.05);

\coordinate (Lt1) at ($(aT)!-0.65!(bT)$);
\coordinate (Lt2) at ($(aT)!1.65!(bT)$);

\coordinate (Rt1) at ($(aTp)!-0.65!(bTp)$);
\coordinate (Rt2) at ($(aTp)!1.65!(bTp)$);

\draw[guide] (Lt1) -- (Lt2);
\draw[guide] (Rt1) -- (Rt2);

\draw[ntcomp]
    (aT) --
    node[pos=0.55,left=1.2mm,labelstyle] {$\eta_t$}
    (bT);

\draw[ntcomp]
    (aTp) --
    node[pos=0.55,right=1.2mm,labelstyle] {$\eta_{t'}$}
    (bTp);


\coordinate (A1) at (3.70,2.15);
\coordinate (A2) at (4.90,3.05);
\coordinate (A3) at (5.80,2.35);
\coordinate (A4) at (6.80,3.05);

\draw[histpathA] (aT)  -- (A1);
\draw[histpathA] (A1)  -- (A2);
\draw[histpathA] (A2)  -- (A3);
\draw[histpathA] (A3)  -- (A4);
\draw[histpathA] (A4)  -- (aTp);


\coordinate (B1) at (3.85,3.30);
\coordinate (B2) at (5.95,3.30);
\coordinate (B3) at (6.50,4.20);
\coordinate (B4) at (7.55,4.17);
\coordinate (B5) at (7.82,3.48);
\coordinate (B6) at (8.85,4.42);

\draw[histpathB] (bT)  -- (B1);
\draw[histpathB] (B1)  -- (B2);
\draw[histpathB] (B2)  -- (B3);
\draw[histpathB] (B3)  -- (B4);
\draw[histpathB] (B4)  -- (B5);
\draw[histpathB] (B5)  -- (B6);
\draw[histpathB] (B6)  -- (bTp);


\coordinate (B7)   at (10.85,4.60);
\coordinate (B8)   at (12.60,4.55);
\coordinate (B9)   at (13.55,5.05);
\coordinate (B10)  at (14.45,4.60);
\coordinate (bTpp) at (15.05,3.75);

\draw[histpathB] (bTp) -- (B7);
\draw[histpathB] (B7)  -- (B8);
\draw[histpathB] (B8)  -- (B9);
\draw[histpathB] (B9)  -- (B10);
\draw[histpathB] (B10) -- (bTpp);

\coordinate (UB1) at ($(bTpp)+(-0.65,-0.65)$);
\coordinate (UB2) at ($(bTpp)+(0.65,0.65)$);

\draw[guide] (UB1) -- (UB2);


\coordinate (A5)    at (9.00,2.35);
\coordinate (aDrop) at (9.95,2.55);

\draw[histpathA] (aTp) -- (A5);
\draw[histpathA] (A5)  -- (aDrop);

\coordinate (DA1) at ($(aDrop)+(-0.65,-0.65)$);
\coordinate (DA2) at ($(aDrop)+(0.65,0.65)$);

\draw[guide] (DA1) -- (DA2);


\coordinate (cStart) at (12.05,0.25);
\coordinate (C1)     at (13.25,0.25);
\coordinate (C2)     at (14.35,0.25);
\coordinate (cEnd)   at (15.30,0.65);

\draw[histpathC] (cStart) -- (C1);
\draw[histpathC] (C1)     -- (C2);
\draw[histpathC] (C2)     -- (cEnd);

\coordinate (LC1) at ($(cStart)+(-0.65,-0.65)$);
\coordinate (LC2) at ($(cStart)+(0.65,0.65)$);

\coordinate (RC1) at ($(cEnd)+(-0.65,-0.65)$);
\coordinate (RC2) at ($(cEnd)+(0.65,0.65)$);

\draw[guide] (LC1) -- (LC2);
\draw[guide] (RC1) -- (RC2);


\draw[blocked]
    (aDrop) --
    node[pos=0.53,right=2mm,labelstyle,align=left]
    {}
    (cStart);

\coordinate (X) at ($(aDrop)!0.52!(cStart)$);

\draw[line width=1.0pt]
    ($(X)+(-0.16,-0.16)$) --
    ($(X)+(0.16,0.16)$);

\draw[line width=1.0pt]
    ($(X)+(-0.16,0.16)$) --
    ($(X)+(0.16,-0.16)$);

\node[labelstyle, above right=1mm of X]
    {$\Hom_{\Sys_\diam}(x^{-},x^{+})=\varnothing$};


\node[pathvertex] at (A1) {};
\node[pathvertex] at (A2) {};
\node[pathvertex] at (A3) {};
\node[pathvertex] at (A4) {};
\node[pathvertex] at (A5) {};

\node[pathvertex] at (B1) {};
\node[pathvertex] at (B2) {};
\node[pathvertex] at (B3) {};
\node[pathvertex] at (B4) {};
\node[pathvertex] at (B5) {};
\node[pathvertex] at (B6) {};
\node[pathvertex] at (B7) {};
\node[pathvertex] at (B8) {};
\node[pathvertex] at (B9) {};
\node[pathvertex] at (B10) {};

\node[pathvertex] at (C1) {};
\node[pathvertex] at (C2) {};


\node[state] (AT)    at (aT)    {};
\node[state] (BT)    at (bT)    {};
\node[state] (ATp)   at (aTp)   {};
\node[state] (BTp)   at (bTp)   {};
\node[state] (ADrop) at (aDrop) {};
\node[state] (BTpp)  at (bTpp)  {};
\node[state] (CStart) at (cStart) {};
\node[state] (CEnd)   at (cEnd)   {};


\node[labelstyle,left=1mm of AT]
    {$\mathsf A_\diam(t)$};

\node[labelstyle,left=1mm of BT]
    {$\mathsf B_\diam(t)$};

\node[labelstyle, below=1mm of ATp]
    {$\mathsf A_\diam(t')$};

\node[labelstyle, below right=0.2mm of BTp]
    {$\mathsf B_\diam(t')$};

\node[labelstyle, right=1mm of BTpp]
    {$\mathsf B_\diam(t'')$};

\node[labelstyle,right=1mm of ADrop]
    {$x^{-}=\mathsf A_\diam(t^\star_-)$};

\node[labelstyle, left=1mm of CStart]
    {$x^{+}=\mathsf C_\diam(t^\star_+)$};

\node[labelstyle,right=1mm of CEnd]
    {$\mathsf C_\diam(t'')$};

\node[labelstyle] at (3.95,4.95)
    {$S^{\mathrm{real}}_\diam(t)$};

\node[labelstyle] at (10.25,5.15)
    {$S^{\mathrm{real}}_\diam(t')$};

\node[labelstyle] at (16.05,4.65)
    {$S^{\mathrm{real}}_\diam(t'')$};

\node[labelstyle] at (15.05,-0.35)
    {$S^{\mathrm{real}}_\diam(t'')$};

\node[labelstyle] at (11.25,3.45)
    {$S^{\mathrm{real}}_\diam(t^{*}_{-})$};

\node[labelstyle] at (12.80,1.0)
    {$S^{\mathrm{real}}_\diam(t^{*}_{+})$};

\node[labelstyle] at (5.35,1.95)
    {$\mathsf A_{t,t'}$};

\node[labelstyle] at (6.25,4.65)
    {$\mathsf B_{t,t'}$};

\node[labelstyle] at (12.60,5.18)
    {$\mathsf B_{t',t''}$};

\node[labelstyle] at (13.65,-0.03)
    {$\mathsf C_{t^\star_+,t''}$};



\tikzset{
    sysedge/.style={
        draw=black,
        line width=2.2pt,
        line cap=round
    }
}


\coordinate (aTsys)     at ($(aT)+(0,7.50)$);
\coordinate (bTsys)     at ($(bT)+(0,7.50)$);
\coordinate (aTpsys)    at ($(aTp)+(0,7.50)$);
\coordinate (bTpsys)    at ($(bTp)+(0,7.50)$);
\coordinate (aDropsys)  at ($(aDrop)+(0,7.50)$);
\coordinate (bTppsys)   at ($(bTpp)+(0,7.50)$);
\coordinate (cStartsys) at ($(cStart)+(0,7.50)$);
\coordinate (cEndsys)   at ($(cEnd)+(0,7.50)$);

\node[labelstyle,anchor=east]
    at ($(aT)+(-1.60,2)$)
    {$\mathsf{Path}^{\tau}_\diam$};

\node[labelstyle,anchor=east]
    at ($(aTsys)+(-1.60,0)$)
    {$\Sys_\diam$};


\draw[sysedge]
    (aTsys) -- (bTsys);

\draw[sysedge]
    (aTsys) -- (aTpsys);

\draw[sysedge]
    (bTsys) -- (bTpsys);

\draw[sysedge]
    (aTpsys) -- (bTpsys);

\draw[sysedge]
    (aTpsys) -- (aDropsys);

\draw[sysedge]
    (bTpsys) -- (bTppsys);

\draw[sysedge]
    (cStartsys) -- (cEndsys);



\node[state] (ATSys)     at (aTsys)     {};
\node[state] (BTSys)     at (bTsys)     {};
\node[state] (ATpSys)    at (aTpsys)    {};
\node[state] (BTpSys)    at (bTpsys)    {};
\node[state] (ADropSys)  at (aDropsys)  {};
\node[state] (BTppSys)   at (bTppsys)   {};
\node[state] (CStartSys) at (cStartsys) {};
\node[state] (CEndSys)   at (cEndsys)   {};


\node[labelstyle,left=1mm of ATSys]
    {$\mathsf A_\diam(t)$};

\node[labelstyle,left=1mm of BTSys]
    {$\mathsf B_\diam(t)$};

\node[labelstyle,below=1mm of ATpSys]
    {$\mathsf A_\diam(t')$};

\node[labelstyle,below right=0.2mm of BTpSys]
    {$\mathsf B_\diam(t')$};

\node[labelstyle,right=1mm of ADropSys]
    {$x^{-}=\mathsf A_\diam(t^\star_-)$};

\node[labelstyle,right=1mm of BTppSys]
    {$\mathsf B_\diam(t'')$};

\node[labelstyle,left=1mm of CStartSys]
    {$x^{+}=\mathsf C_\diam(t^\star_+)$};

\node[labelstyle,right=1mm of CEndSys]
    {$\mathsf C_\diam(t'')$};
\end{tikzpicture}
\caption{The figure separates the path-level and state-category representations of realized AI system histories. In the lower \(\mathsf{Path}^{\tau}_\diam\) layer, the trustworthiness-level planes \(\tau_\diam=k\) and \(\tau_\diam=\ell<k\) contain explicit path representatives. The solid polygonal paths are time-ordered representatives
of the history morphisms \(\mathsf A_{t,t'}\), \(\mathsf B_{t,t'}\), \(\mathsf B_{t',t''}\), and \(\mathsf C_{t^\star_+,t''}\), while the dashed paths \(\eta_t\) and \(\eta_{t'}\) are time-synchronous representatives of the
components of a natural transformation
\(\eta:\mathsf A_\diam\Rightarrow\mathsf B_\diam\).
The slanted lines indicate the co-realized state sets
\(S^{\mathrm{real}}_\diam(t)\),
\(S^{\mathrm{real}}_\diam(t')\),
\(S^{\mathrm{real}}_\diam(t^\star_-)\),
\(S^{\mathrm{real}}_\diam(t^\star_+)\), and
\(S^{\mathrm{real}}_\diam(t'')\). In the upper \(\Sys_\diam\) layer, the same states are connected by  segments whenever the lower layer contains a trustworthiness-level-preserving time-ordered representative between them. Since \(\Sys_\diam\) is thin, each such segment represents the unique morphism between its endpoints, and the naturality condition for the comparison of \(\mathsf A_\diam\) and \(\mathsf B_\diam\) holds. After \(\mathsf A_\diam\) reaches
\(x^-=\mathsf A_\diam(t^\star_-)\) at level \(k\), the post-drop state \(x^+=\mathsf C_\diam(t^\star_+)\) lies at level
\(\ell<k\). Hence \(\Hom_{\Sys_\diam}(x^-,x^+)=\varnothing\), represented in the upper layer
by the absence of a connecting segment. The level drop therefore interrupts the history of \(\mathsf A_\diam\); if the post-drop evolution is realized, it is represented by a new history object beginning at \(t^\star_+\), in
\(\Traj_{\diam,[t^\star_+,T]}\).}
\label{fig:trustworthiness_level_drop}
\end{figure}

\section{AI Identity in Category Theory}
\label{section:AI_identity_category}
Finally, we identify the notions of AI system identity that emerge from the categorical formalization and relate them to the AI identity criteria in Definition~\ref{def:ferrario_identity_criteria}. Let
\(\Traj_{\diam,[t_0,T]}\)
be the category of realized AI system histories of fixed type \(\diam\) introduced in Definition~\ref{def:AI_cat}.

\subsection{Equality of AI System History Functors in \(\Traj_{\diam,[t_0,T]}\)}
We start our investigation of AI identity in \(\Traj_{\diam,[t_0,T]}\) with a very strict concept: equality of functors.

\begin{proposition}
\label{prop:equality_same_objects}
Let
\(\mathsf A_\diam,\mathsf B_\diam\)
be objects of
\(\Traj_{\diam,[t_0,T]}\).
If
\[
\mathsf A_\diam(t)=\mathsf B_\diam(t)
\]
in \(\Sys_\diam\) for every \(t\in[t_0,T]\), then
\[
\mathsf A_\diam=\mathsf B_\diam
\]
as functors, and hence as objects of
\(\Traj_{\diam,[t_0,T]}\).
\end{proposition}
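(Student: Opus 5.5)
The plan is to show that two functors \(\mathsf A_\diam,\mathsf B_\diam:\Time_{[t_0,T]}\to\Sys_\diam\) agree on objects and on morphisms. A functor is determined by these two assignments, so this suffices. The object part is exactly the hypothesis \(\mathsf A_\diam(t)=\mathsf B_\diam(t)\) for every \(t\in[t_0,T]\). The only objects of \(\Time_{[t_0,T]}\) are these times, so the object assignments coincide.

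For the morphism part, fix \(t\leq t'\) in \([t_0,T]\). The only arrow \(t\to t'\) is \(\star_{t,t'}\). By functoriality,
\(\mathsf A_{t,t'}=\mathsf A_\diam(\star_{t,t'})\in\Hom_{\Sys_\diam}(\mathsf A_\diam(t),\mathsf A_\diam(t'))\). Likewise, \(\mathsf B_{t,t'}\in\Hom_{\Sys_\diam}(\mathsf B_\diam(t),\mathsf B_\diam(t'))\). By the hypothesis these two hom-sets are the same hom-set of \(\Sys_\diam\). Both are nonempty, since each contains the morphism just named. By Proposition~\ref{prop:Sys_reachability}, a nonempty hom-set of \(\Sys_\diam\) contains exactly one morphism. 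Hence \(\mathsf A_{t,t'}=\mathsf B_{t,t'}\).

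Since \(\Time_{[t_0,T]}\) has no other morphisms, the two functors coincide on all arrows. Therefore \(\mathsf A_\diam=\mathsf B_\diam\) as functors.

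Finally, the objects of \(\Traj_{\diam,[t_0,T]}\) are by Definition~\ref{def:AI_cat} just the realized history functors themselves, not functors paired with extra data. Equality as functors is therefore equality as objects. The temporal-admissibility and realization conditions are properties of a functor, not additional structure, so nothing further needs to be checked.

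I expect no real obstacle. The one point requiring care is the uniqueness of the morphism assigned to \(\star_{t,t'}\). This rests entirely on the thinness of \(\Sys_\diam\), which comes from the quotient by \(\sim_\tau\). The argument would fail in \(\mathsf{Path}^{\tau}_\diam\), where different lifecycle paths between the same states remain distinct.
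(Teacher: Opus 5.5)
Your proof is correct and follows the paper's argument. The object assignments agree by hypothesis, and for each \(t\le t'\) the morphisms \(\mathsf A_{t,t'}\) and \(\mathsf B_{t,t'}\) are parallel in the thin category \(\Sys_\diam\), hence equal; your added remarks on why equality as functors gives equality as objects of \(\Traj_{\diam,[t_0,T]}\) are accurate elaborations of what the paper leaves implicit.
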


\begin{proof}
For every \(t\leq t'\), both
\(\mathsf A_{t,t'}\) and \(\mathsf B_{t,t'}\)
are morphisms from the common state
\(\mathsf A_\diam(t)=\mathsf B_\diam(t)\)
to the common state
\(\mathsf A_\diam(t')=\mathsf B_\diam(t')\).
Since \(\Sys_\diam\) is thin, these parallel morphisms are equal. The two functors therefore agree on objects and morphisms.
\end{proof}

The functor equality is very strict and has limited usefulness for AI identity. It applies, for instance, to two deployed copies of the same AI system type that occupy the same profile-relative state at every time. Their deployment conditions must therefore be sufficiently compatible for the copies to have identical quantified profiles and trustworthiness levels throughout the observed interval. That said, equality of the history functors does not require the represented tokens to undergo the same lifecycle interventions.

\subsection{Weak and Strong AI Identity Criteria}
\label{subsubsection:categorical_recovery_identity}
The identity criteria in
Definition~\ref{def:ferrario_identity_criteria}
admit \textbf{weak} and \textbf{strong} categorical readings. 

\begin{definition}[Weak and strong state identity] \label{def:weak_strong_state_identity} 
Fix the type datum \(\diam\). For states \(x,y\in S_\diam\), the \emph{weak} \(\diam\)-relative identity relation is defined by 
\[ x\equiv_\tau y \quad\Longleftrightarrow\quad \tau_\diam(x)=\tau_\diam(y). \] 
The \emph{strong} \(\diam\)-relative identity relation is categorical isomorphism in \(\Sys_\diam\): 
\[ 
x\cong_{\Sys_\diam} y. 
\] 
\end{definition}

On the \textbf{weak} reading, two states are identical relative to \(\diam\) whenever they belong to the same trustworthiness-level fibre. On the \textbf{strong} reading, identity requires categorical isomorphism in \(\Sys_\diam\), that is, admissible trustworthiness-level-preserving reachability in both directions. 
However, for synchronic comparisons between realized histories, this strong condition must additionally be witnessed by time-synchronous representatives. Thus, ambient isomorphism in \(\Sys_\diam\) is necessary but does not by itself establish strong synchronic identity in
\(\Traj_{\diam,[t_0,T]}\). While the weak reading directly recovers the criterion of \citet{ferrario2025trustworthiness}---see Definition~\ref{def:ferrario_identity_criteria}, the strong reading is a stricter category-theoretic refinement that follows from our formalization.

The relations in Definition~\ref{def:weak_strong_state_identity} are defined on the abstract state space \(S_\diam\). However, AI system identity  concerns states occupied by realized AI system histories. In other words, the relevant states are of the form \(\mathsf A_\diam(t)\) for some realized history \(\mathsf A_\diam\in\operatorname{Ob}\bigl(\Traj_{\diam,[t_0,T]}\bigr)\) and some \(t\in[t_0,T]\). These considerations motivate the following theorem.

\begin{theorem}[Weak and strong identity for realized AI system histories]
\label{thm:ferrario_identity_categorical}
Let
\(\mathsf A_\diam,\mathsf B_\diam\)
be objects of
\(\Traj_{\diam,[t_0,T]}\), and write
\[
a(t)=\mathsf A_\diam(t),
\qquad
b(t)=\mathsf B_\diam(t).
\]
Diachronic identity compares states \(a(t)\) and \(a(t')\) along the same realized history, while synchronic identity compares states \(a(t)\) and \(b(t)\) at the same time \(t\). Then the following statements hold.

\begin{enumerate}[label=(\roman*),nosep]

\item For every \(t\leq t'\), the history morphism
\(
\mathsf A_{t,t'}\in\Hom_{\Sys_\diam}(a(t),a(t'))
\)
in \(\Sys_\diam\) implies
\[
a(t)\equiv_\tau a(t').
\]
Hence every realized AI system history satisfies the
\textbf{\emph{weak diachronic identity criterion}}.

\item For \(t\leq t'\), the realized states \(a(t)\) and \(a(t')\) satisfy
\textbf{\emph{strong diachronic identity}},
\[
a(t)\cong_{\Sys_\diam} a(t'),
\]
if and only if there exists a morphism in \(\Sys_\diam\) from \(a(t')\) to \(a(t)\). Equivalently, the realized history \(\mathsf A_\diam\) satisfies strong diachronic identity throughout
\([t_0,T]\) if and only if it factors through the maximal subgroupoid \(
\operatorname{Core}(\Sys_\diam)
\hookrightarrow
\Sys_\diam.
\)

\item If, for some \(t\in[t_0,T]\), there exists a time-synchronous comparison morphism
\(
\eta_t\in\Hom_{\Sys_\diam}(a(t),b(t)),
\)
then
\[
a(t)\equiv_\tau b(t).
\]
Consequently, the existence of a morphism \(\eta:\mathsf A_\diam\Rightarrow\mathsf B_\diam\) in \(\Traj_{\diam,[t_0,T]}\) implies
\textbf{\emph{weak synchronic identity}} at every \(t\in[t_0,T]\).

\item At time \(t\), the realized states \(a(t)\) and \(b(t)\) satisfy
\textbf{\emph{strong synchronic identity}} if and only if 
\(a(t)\cong_{\Sys_\diam} b(t)\) is witnessed time-synchronously; equivalently, if and only if there exist morphisms
\[
\eta_t\in\Hom_{\Sys_\diam}(a(t),b(t)),
\qquad
\rho_t\in\Hom_{\Sys_\diam}(b(t),a(t)),
\]
each admitting a time-synchronous representative at \(t\). In particular,
\[
\mathsf A_\diam\cong_{\Traj_{\diam,[t_0,T]}}\mathsf B_\diam
\]
if and only if strong synchronic identity holds at every
\(t\in[t_0,T]\).

\end{enumerate}
\end{theorem}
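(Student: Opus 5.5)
The plan is to reduce all four items to two facts already established. The first is the level-preservation built into \(\Sys_\diam\): every morphism is represented by a path in \(\mathsf{Path}^{\tau}_\diam\), so its endpoints share the same value of \(\tau_\diam\). The second is the thinness of \(\Sys_\diam\) and of \(\Traj_{\diam,[t_0,T]}\), given by Proposition~\ref{prop:Sys_reachability} and Proposition~\ref{prop:Traj_category}. The only ingredient specific to realized histories is time-synchronicity, and it enters only in (iv).

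For (i), take any representative path of \(\mathsf A_{t,t'}\). By Definition~\ref{def:tau_preserving_path}, all of its states have level \(\tau_\diam(a(t'))\), so \(\tau_\diam(a(t))=\tau_\diam(a(t'))\), which is \(a(t)\equiv_\tau a(t')\). Item (iii) follows by the same argument applied to \(\eta_t\), read componentwise for every \(t\) when a morphism \(\eta\) exists in \(\Traj_{\diam,[t_0,T]}\).

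For (ii), use the paragraph on thinness and isomorphism in \(\Sys_\diam\): \(x\cong y\) holds iff \(x\preceq_\tau y\) and \(y\preceq_\tau x\). Since \(\mathsf A_{t,t'}\) already supplies \(a(t)\preceq_\tau a(t')\), strong diachronic identity is equivalent to the existence of a morphism \(a(t')\to a(t)\). For the factorization claim, suppose the condition holds for all \(t\le t'\). Then every \(\mathsf A_{t,t'}\) is an isomorphism, because in a thin category the two composites are forced to be identities. Hence \(\mathsf A_\diam\) lands in \(\operatorname{Core}(\Sys_\diam)\), which has the same objects and contains all isomorphisms, so \(\mathsf A_\diam\) factors through the inclusion. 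Conversely, if \(\mathsf A_\diam\) factors through \(\operatorname{Core}(\Sys_\diam)\), each \(\mathsf A_{t,t'}\) is an isomorphism, and its inverse is the required reverse morphism.

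For (iv), the first equivalence is essentially definitional. Strong synchronic identity is read as \(a(t)\cong_{\Sys_\diam} b(t)\) witnessed time-synchronously, meaning that both directions admit representatives in \(S^{\mathrm{real}}_\diam(t)\); by thinness, the composites of \(\eta_t\) and \(\rho_t\) are automatically identities. The substantive part is the characterization of \(\mathsf A_\diam\cong\mathsf B_\diam\) in \(\Traj_{\diam,[t_0,T]}\).

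\begin{itemize}
\item If \(\mathsf A_\diam\cong\mathsf B_\diam\), there are morphisms \(\eta:\mathsf A_\diam\Rightarrow\mathsf B_\diam\) and \(\rho:\mathsf B_\diam\Rightarrow\mathsf A_\diam\) in \(\Traj_{\diam,[t_0,T]}\). Their components at each \(t\) give time-synchronously represented morphisms in both directions.
\item Conversely, suppose such components \(\eta_t\) and \(\rho_t\) are given at every \(t\). These families must be assembled into natural transformations. Naturality, equation~\eqref{eq:naturality}, holds automatically by thinness, since both composites are parallel morphisms \(a(t)\to b(t')\) that exist. The time-synchronous representatives are given by hypothesis. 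This yields morphisms \(\eta\) and \(\rho\) in \(\Traj_{\diam,[t_0,T]}\), and because \(\Traj_{\diam,[t_0,T]}\) is thin, their composites are identities. Hence \(\mathsf A_\diam\cong\mathsf B_\diam\).
\end{itemize}

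The main point requiring care is this converse direction of (iv). One must be explicit that naturality imposes no extra condition in a thin category, and that no time-synchronicity is needed for the history morphisms \(\mathsf A_{t,t'}\) and \(\mathsf B_{t,t'}\) themselves. That is why only the componentwise hypotheses are used.
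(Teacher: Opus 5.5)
Your proposal is correct and follows essentially the same route as the paper's proof. Level preservation of representatives in \(\mathsf{Path}^{\tau}_\diam\) gives (i) and (iii), and thinness of \(\Sys_\diam\) and \(\Traj_{\diam,[t_0,T]}\) gives the isomorphism and \(\operatorname{Core}\)-factorization in (ii) and the automatic naturality and identity composites in (iv); your explicit remark that the history morphisms themselves need no time-synchronous representatives only makes the converse of (iv) slightly more transparent.
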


\begin{proof}
Every morphism in \(\Sys_\diam\) admits at least one representative in
\(\mathsf{Path}^{\tau}_\diam\), and all states occurring along such a
representative have one common trustworthiness level. 

For~$(i)$, the history morphism
\(
\mathsf A_{t,t'}
\)
therefore implies
\(
\tau_\diam(a(t))
=
\tau_\diam(a(t')),
\)
and hence
\(
a(t)\equiv_\tau a(t').
\)

For~\((ii)\), the history morphism \(\mathsf A_{t,t'}\)
already provides trustworthiness-preserving reachability in the forward direction. Hence \(a(t)\cong_{\Sys_\diam}a(t')\) holds exactly when there is also a reverse morphism from \(a(t')\) to \(a(t)\) in \(\Sys_\diam\). Since \(\Sys_\diam\) is thin, the existence of morphisms in both directions forces their composites to be the corresponding identity morphisms. Thus the forward history morphism is an isomorphism in \(\Sys_\diam\). Requiring this for every \(t\leq t'\) is precisely the condition that \(\mathsf A_\diam\)
factors through the inclusion
\(
\operatorname{Core}(\Sys_\diam)
\hookrightarrow
\Sys_\diam.
\)

For~$(iii)$, a time-synchronous comparison morphism
\(
\eta_t\in\Hom_{\Sys_\diam}(a(t),b(t))
\)
is represented by a trustworthiness-level-preserving path. Consequently,
\(
\tau_\diam(a(t))
=
\tau_\diam(b(t)),
\)
and therefore
\(
a(t)\equiv_\tau b(t).
\)
If
\(\eta:\mathsf A_\diam\Rightarrow\mathsf B_\diam\)
is a morphism in
\(\Traj_{\diam,[t_0,T]}\),
the same argument applies to every component \(\eta_t\), yielding weak synchronic identity at every
\(t\in[t_0,T]\).

For~$(iv)$, suppose first that there exist time-synchronous morphisms
\[
\eta_t\in\Hom_{\Sys_\diam}(a(t),b(t)),
\qquad
\rho_t\in\Hom_{\Sys_\diam}(b(t),a(t)).
\]
Their composites are endomorphisms of \(a(t)\) and \(b(t)\). Since
\(\Sys_\diam\) is thin,
\(
\rho_t\circ\eta_t=\id_{a(t)}\), \(\eta_t\circ\rho_t=\id_{b(t)}\).
Thus,
\(
a(t)\cong_{\Sys_\diam} b(t)
\)
through time-synchronous comparison morphisms. Now suppose that strong synchronic identity holds at every
\(t\in[t_0,T]\). Since \(\Sys_\diam\) is thin, the comparison morphisms in each direction are unique whenever they exist. Then the two component families automatically define natural transformations
\[
\eta:\mathsf A_\diam\Rightarrow\mathsf B_\diam,
\qquad
\rho:\mathsf B_\diam\Rightarrow\mathsf A_\diam
\]
in
\(\Traj_{\diam,[t_0,T]}\).
Their composites are endomorphisms of
\(\mathsf A_\diam\) and \(\mathsf B_\diam\), respectively. Since
\(\Traj_{\diam,[t_0,T]}\) is thin, these composites are the corresponding identity natural transformations. Hence
\[
\mathsf A_\diam\cong\mathsf B_\diam
\quad\text{in}\quad
\Traj_{\diam,[t_0,T]}.
\]
The converse follows immediately because an isomorphism in
\(\Traj_{\diam,[t_0,T]}\) has time-synchronous components in both directions at every time.
\end{proof}

Theorem~\ref{thm:ferrario_identity_categorical} clarifies how the weak and strong readings behave once they are applied to realized AI system histories. First, weak diachronic identity is automatic for every realized AI system history, even though the quantified profile, implementation, deployment conditions, or lifecycle provenance may vary along the history. Second, strong diachronic identity is stricter. It amounts to mutual trustworthiness-preserving \emph{reachability} of the endpoint states. However, this does not mean that the realized lifecycle is temporally reversible: the reverse morphism from \(a(t')\) to \(a(t)\) is a morphism in \(\Sys_\diam\), not necessarily a time-ordered reversal of the sequence of interventions. Third, time-synchronous comparison between realized histories implies weak synchronic identity. Thus, morphisms in
\(\Traj_{\diam,[t_0,T]}\), i.e., time-synchronous natural transformations, provide a categorical sufficient condition for weak synchronic AI system identity throughout the observed interval. Finally, isomorphisms in
\(\Traj_{\diam,[t_0,T]}\) of realized AI system history functors over $[t_0,T]$ capture strong synchronic identity of realized states over time.

\subsection{Examples of AI Identity}
\label{subsec:examples}
\paragraph{Example 1: Weak and strong diachronic identity under lifecycle change.}
Consider a credit-scoring AI system deployed at time \(t\) and subsequently recalibrated, retrained on refreshed data, and migrated to a new serving infrastructure at time \(t'>t\). These interventions may change its quantified profile, so that
\(
Q_P^{\mathsf A}(t)\neq Q_P^{\mathsf A}(t'),
\)
while leaving its trustworthiness level unchanged. Then, the two states are weakly diachronically identical, \(a(t)\equiv_\tau a(t'),\) even though they are not equal. However, how much profile variation is compatible with weak identity depends on the granularity of \(L_P\): a finer level function distinguishes smaller changes, whereas a coarser function permits greater variation within one level fibre. Strong diachronic identity holds when a reverse admissible trustworthiness-level-preserving path from \(a(t')\) to \(a(t)\) also exists. Such a reverse morphism witnesses mutual trustworthiness-preserving reachability of the endpoint states. It need not undo the concrete interventions that produced the later state, nor need it represent a backward-in-time lifecycle trajectory. If an update instead produces
\[
\tau_\diam(a(t'))\neq\tau_\diam(a(t)),
\]
the identity-preserving history segment terminates. The later state cannot belong to the same AI system history functor, although it may become the initial state of a new history segment after reassessment, remediation, or reclassification. The two segments are neither equal as functors nor isomorphic through time-synchronous natural transformations.

\paragraph{Example 2: Weak synchronic identity under profile-relative rescaling.}
Let \(\mathsf A_\diam,\mathsf B_\diam:
\Time_{[t_0,T]}\to\Sys_\diam\) represent two copies of the same AI system type deployed in different but type-compatible settings, such as the same medical AI system used in two comparable clinics within the same geographic region. At time \(t\), write \(a(t)=\mathsf A_\diam(t)=(Q_P^{\mathsf A}(t),L_P(Q_P^{\mathsf A}(t)))\) and \(b(t)=\mathsf B_\diam(t)=(Q_P^{\mathsf B}(t),L_P(Q_P^{\mathsf B}(t)))\). Suppose that the two profiles differ only in the \(i\)-th profile dimension, with \(Q_{P,i}^{\mathsf B}(t)=\lambda Q_{P,i}^{\mathsf A}(t)\) for some \(\lambda\in(0,1)\), while \(Q_{P,j}^{\mathsf B}(t)=Q_{P,j}^{\mathsf A}(t)\) for every \(j\neq i\). The rescaling may represent reduced evidential support or deployment-specific attenuation of the \(i\)-th trustworthiness dimension.

Assume that \(L_P\) is insensitive to changes in the \(i\)-th coordinate along the rescaling range under consideration. It follows that
\(
L_P\!\left(Q_P^{\mathsf A}(t)\right)
=
L_P\!\left(Q_P^{\mathsf B}(t)\right)
\). Choose
\(
1=\lambda_0>\lambda_1>\cdots>\lambda_m=\lambda,
\)
and define profiles \(Q_P^{(k)}(t)\in I_P\) by
\[
Q_{P,j}^{(k)}(t)
=
\begin{cases}
\lambda_k Q_{P,i}^{\mathsf A}(t), & j=i,\\[2mm]
Q_{P,j}^{\mathsf A}(t), & j\neq i.
\end{cases}
\]
Let
\[
x_k(t)
=
\left(
Q_P^{(k)}(t),
L_P(Q_P^{(k)}(t))
\right),\quad L_P(Q_P^{(k)}(t))=L_P(Q_P^{\mathsf A}(t)),\text{ for all } k=0,\dots,m.
\]

Suppose, in addition, that every \(x_k(t)\) belongs to
\(S^{\mathrm{real}}_\diam(t)\)
and that each consecutive pair
\(x_{k-1}(t),x_k(t)\)
is connected by an admissible primitive transformation. Then
the path 
\[
f:\bigl(
x_0(t)=a(t)\to
x_1(t)\to\cdots\to
x_m(t)=b(t)
\bigr)
\]
is a time-synchronous trustworthiness-level-preserving representative of the morphism
\(\eta_t\in\Hom_{\Sys_\diam}(a(t),b(t))\). It witnesses categorical comparability at time \(t\) and therefore establishes weak synchronic identity between the two copies at that time.

\paragraph{Example 3: Strong synchronic identity under an invertible profile transformation.}
Consider a vendor-hosted production
system \(\mathsf A_\diam\) and an operationally different local, white-labeled, or canary deployment \(\mathsf B_\diam\). Let
\(\mathsf A_\diam,\mathsf B_\diam:
\Time_{[t_0,T]}\to\Sys_\diam\)
be their realized AI system histories, and write
\(
a(t)
=
\left(
Q_P^{\mathsf A}(t),
L_P(Q_P^{\mathsf A}(t))
\right).
\)
Let
\(\varphi:I_P\to I_P\)
be a bijection with inverse
\(\varphi^{-1}\), and suppose that
\[
L_P(\varphi(Q))=L_P(Q)
\qquad
\text{for every }Q\in I_P.
\]
Define
\[
Q_P^{\mathsf B}(t)
=
\varphi(Q_P^{\mathsf A}(t)),
\qquad
b(t)
=
\left(
Q_P^{\mathsf B}(t),
L_P(Q_P^{\mathsf B}(t))
\right).
\]

Suppose that the forward and inverse profile transformations are witnessed at time \(t\) by admissible time-synchronous representatives defining morphisms
\(
\eta_t\in\Hom_{\Sys_\diam}(a(t),b(t))\) and 
\(\rho_t\in\Hom_{\Sys_\diam}(b(t),a(t)).\)
Since \(\Sys_\diam\) is thin, 
\(
\rho_t\circ\eta_t=\id_{a(t)}\), \(\eta_t\circ\rho_t=\id_{b(t)},
\)
and hence
\(
a(t)\cong_{\Sys_\diam} b(t).
\)
Thus, the two states satisfy strong synchronic identity at time \(t\). If such time-synchronous representatives exist in both directions for every \(t\in[t_0,T]\), then the corresponding component families assemble into a natural isomorphism \(\mathsf A_\diam\cong_{\Traj_{\diam,[t_0,T]}}\mathsf B_\diam,\) and the two realized histories satisfy strong synchronic identity throughout the observed interval.

The bijectivity of \(\varphi\) at the profile level does not by itself produce morphisms in \(\Sys_\diam\). Strong synchronic identity additionally requires that both directions be instantiated by admissible time-synchronous lifecycle paths. Governance-relevant examples include the identity transformation \(\varphi=\id_{I_P}\), for which \(\mathsf B_\diam=\mathsf A_\diam\) and \(\eta_t=\id_{a(t)}\). A nontrivial example is the coordinatewise power transformation
\(\varphi_p(Q_1,\ldots,Q_n)=((Q_1)^p,\ldots,(Q_n)^p)\), with \(p>0\), whose inverse is \(\varphi_{1/p}\). Such a transformation may represent an invertible nonlinear rescaling of the profile measurements: \(p>1\) gives greater prominence to high scores and compresses intermediate ones, whereas \(0<p<1\) expands intermediate and lower scores. More generally, one may use
\[
\varphi_{p}(Q_1,\ldots,Q_n)=((Q_1)^{p_1},\ldots,(Q_n)^{p_n}),
\]
where every \(p_i>0\), with inverse given by the exponents \(1/p_i\). Setting \(p_i=1\) on selected coordinates allows only particular profile dimensions to be rescaled. These transformations can model invertible changes in normalization, reporting conventions, or dimension-specific governance sensitivity admitted by the measurement and normalization rules fixed within \(P\). A change of measurement conventions not already covered by \(P\) would instead change the type datum and would not be represented within the same category \(\Sys_\diam\). Such transformations  support strong synchronic identity only when the level function is invariant under the transformation and the forward and inverse transformations admit time-synchronous lifecycle representatives.

\section{Discussion}
\label{section:discussion}
We developed a categorical account of AI system identity from a trustworthiness-based metaphysics of artifacts. The main contribution of the account is that it separates relations between AI system states that the original propositional criteria in \citep{ferrario2025trustworthiness} leave undifferentiated. While the biconditionals defining \(=_\diam\) in \citep{ferrario2025trustworthiness} identify AI systems, synchronically and diachronically, through equality of trustworthiness levels, the categorical construction recovers this weak criterion, but also adds directed and mutual reachability, temporally admissible histories, and time-synchronous comparison of realized AI system histories.

\subsection{A Hierarchy of Identity and Reachability Relations}
Table~\ref{tab:identity_relation_schema} summarizes the identity-relevant relations introduced by the formalization. The original relation \(=_\diam\) is a type-relative equivalence relation induced by equality of trustworthiness levels. The categorical weak relation \(\equiv_\tau\) recovers this level-theoretic criterion at the level of profile-relative abstract states. The preorder \(\preceq_\tau\) records directed trustworthiness-level-preserving reachability between abstract states: \(x\preceq_\tau y\) means that at least one admissible level-preserving path from \(x\) to \(y\) exists. Strong state-level identity is captured by isomorphism in \(\Sys_\diam\), that is, by mutual state reachability. Strong synchronic identity arises at the level of isomorphic realized history functors 
\[
\mathsf A_\diam\cong_{\Traj_{\diam,[t_0,T]}}\mathsf B_\diam.
\]
Thus, our formalization realizes a characteristically categorical idea: identity is not read off from internal descriptions alone, but from structure-preserving transformations \citep{maclane1965categorical,mac1971categories}, and strong synchronic identity of AI systems is expressed by transformations between functors that encode their evolution over time.

\begin{table}[ht]
\centering
\footnotesize
\renewcommand{\arraystretch}{1.25}
\begin{tabularx}{\textwidth}{
    >{\raggedright\arraybackslash}p{0.18\textwidth}
    >{\raggedright\arraybackslash}p{0.21\textwidth}
    >{\raggedright\arraybackslash}X
}
\toprule
\textbf{Relation} & \textbf{Level} & \textbf{Interpretation} \\
\midrule

\(=_\diam\) &
Propositional AI identity criterion &
The original type-relative criterion of \citet{ferrario2025trustworthiness}. Synchronically and diachronically, it is defined by equality of trustworthiness levels within the fixed type datum \(\diam\). \\

\(\equiv_\tau\) &
Weak categorical identity &
For \(x,y\in S_\diam\),
\[
x\equiv_\tau y
\quad\Longleftrightarrow\quad
\tau_\diam(x)=\tau_\diam(y).
\]
Applied to realized states, this yields weak diachronic identity \(a(t)\equiv_\tau a(t')\) and weak synchronic identity \(a(t)\equiv_\tau b(t)\). \\

\(\preceq_\tau\) &
Directed reachability &
For \(x,y\in S_\diam\),
\[
x\preceq_\tau y
\Longleftrightarrow
\text{there exists a trustworthiness-level-preserving path }x\rightsquigarrow y.
\]
This is an abstract reachability relation. It does not, by itself, imply temporal ordering, causal realization, or operational recoverability. \\

\(x\cong_{\Sys_\diam}y\) &
Strong state-level identity &
State isomorphism in \(\Sys_\diam\), equivalently mutual trustworthiness-preserving reachability: \(x\preceq_\tau y\) and \(y\preceq_\tau x\).
For realized diachronic comparison, \(a(t)\cong_{\Sys_\diam}a(t')\) expresses strong identity of endpoint states.\\

\(\eta:\mathsf A_\diam\Rightarrow\mathsf B_\diam\) &
Weak synchronic comparison of histories &
A morphism in \(\Traj_{\diam,[t_0,T]}\) consists of time-synchronous comparison morphisms \(\eta_t\in\Hom_{\Sys_\diam}(a(t),b(t))\) at every \(t\in[t_0,T]\). Its existence implies weak synchronic identity \(a(t)\equiv_\tau b(t)\)
throughout the interval. \\

\(\mathsf A_\diam\cong_{\Traj_{\diam,[t_0,T]}}\mathsf B_\diam\) &
Strong synchronic identity of realized histories &
Natural isomorphism of realized AI system history functors. It holds precisely when, at every \(t\in[t_0,T]\), the realized states \(a(t)\) and \(b(t)\) are mutually comparable through time-synchronous trustworthiness-preserving morphisms. \\

\bottomrule
\end{tabularx}
\caption{Hierarchy of identity, provenance, and reachability relations. Weak identity recovers equality of trustworthiness levels. Strong state-level identity adds mutual reachability in \(\Sys_\diam\). Strong synchronic identity of realized histories is captured by natural isomorphism in \(\Traj_{\diam,[t_0,T]}\).}
\label{tab:identity_relation_schema}
\end{table}

This hierarchy also explains why the construction does not stop at the path categories. The free path category \(\mathsf{Path}_\diam\) is too large for identity because it contains arbitrary admissible lifecycle paths, including paths that leave a trustworthiness-level fibre and later return to it. Restricting to \(\mathsf{Path}^{\tau}_\diam\) solves this problem by retaining only paths whose intermediate states preserve one trustworthiness level. However, \(\mathsf{Path}^{\tau}_\diam\) still remembers provenance, namely the particular recorded sequence of lifecycle transformations by which a state is reached. Distinct level-preserving paths between the same source and target remain distinct, and mutual reachability does not yet amount to categorical isomorphism: the composites of a path \(x\rightsquigarrow y\) and a path \(y\rightsquigarrow x\) are generally non-empty loops, not identity morphisms. Stopping at \(\mathsf{Path}^{\tau}_\diam\) would therefore yield a provenance-sensitive theory of level-preserving lifecycle paths, not a categorical theory of identity as isomorphism.

The quotient \(\Sys_\diam\) performs the required identity-relevant abstraction as it identifies parallel trustworthiness-level-preserving paths and retains only the fact that one state is reachable from another within the same trustworthiness-level fibre. We displayed this graphically in the upper parts of Figures~\ref{fig:AI_history_equiv_class} and~\ref{fig:trustworthiness_level_drop}. However, in \(\Sys_\diam\), an isomorphism does not assert that one concrete lifecycle process reverses another. A rollback, retraining, or recalibration may reconstruct a previous profile-relative state without undoing every computational, organizational, informational, or external consequence of the forward transformation. That is, isomorphism in \(\Sys_\diam\) means only that \emph{both directed, trustworthiness-preserving reachability relations exist}. Since \(\Sys_\diam\) is thin, the two composites are equal to the corresponding identity morphisms in the quotient category. These identity morphisms are represented by empty paths, but the lifecycle loops witnessing mutual reachability need not be empty and need not undo one another.

Similarly, natural isomorphism in \(\Traj_{\diam,[t_0,T]}\) does not require realized AI system histories to coincide. It requires, for every time \(t\in[t_0,T]\), time-synchronous comparison morphisms
whose composites are equal to the identity morphisms in \(\Sys_\diam\). Naturality is automatic once these components exist, because \(\Sys_\diam\) is thin. Thus, natural isomorphism of two realized AI system histories expresses \emph{mutual same-time comparability of two realized AI system histories throughout the interval}, not equality of their raw lifecycle provenance. These natural isomorphisms can be constructed explicitly: a sufficient condition to obtain such  natural isomorphisms is to exhibit, at every time \(t\), trustworthiness-level-preserving invertible transformations between the assessed states of the two realized histories in both directions, as in Example~3.

\subsection{Trustworthiness-Level Changes and Identity Interruption}
In addition, our categorical approach gives a precise sense in which trustworthiness-level changes are identity-interrupting events. If two states \(x^{-},x^{+}\in S_\diam\) satisfy
\[
\tau_\diam(x^{-})\neq \tau_\diam(x^{+}),
\]
then no morphism from \(x^{-}\) to \(x^{+}\) exists in \(\Sys_\diam\). Hence \(x^{-}\) and \(x^{+}\) cannot both lie in the image of the same AI system history functor. They may both be states of AI systems of the same type datum \(\diam\), but they are functorially incommensurable with respect to an uninterrupted identity-preserving history. A change of trustworthiness level therefore terminates one history segment and, if a post-change system is realized, begins another. This point has an important consequence for the design of the trustworthiness-level function \(L_P\). The function \(L_P\) should not be so sensitive to ordinary variation, measurement noise, or expected operational fluctuation in \(q_P\) that minor changes repeatedly push the system across level boundaries. If this happens, the system may appear to \emph{flicker in and out of existence} with respect to a single identity-preserving history functor. This would make identity unstable, evidence transfer difficult to justify, and governance continuity practically unauditable \citep{ferrario2025trustworthiness,ferrario2026methodology}.

For this reason, \(L_P\) must be designed and validated as a robust governance tool. Its level boundaries should be tested during design under plausible operational scenarios, including measurement uncertainty, deployment variation, distribution shift, monitoring noise, and expected model updates \citep{ferrario2026methodology}. Changes in quantified assessments $q_P$ \emph{within} a trustworthiness-level fibre should represent tolerable profile variation, while boundary crossings should mark genuine governance-relevant transitions, such as \emph{substantial modifications} in the EU AI Act---see Article 3(23) therein \citep{EU_AI_Act_2024}. In practice, this may require margin conditions around level boundaries, scenario testing, and robustness analysis. Metaphysically, an unstable \(L_P\) creates unstable identity histories. From a governance perspective, level functions should be auditable for their capacity to support persistence judgments under realistic lifecycle variation.

\subsection{The Use of Categories and Compositionality}
One might ask whether the same formalization could be developed using graph-theoretic or transition-system language. The answer is partly affirmative: our construction begins with precisely such data, namely, the free path category \(\mathsf{Path}_\diam\). At this first level, graph theory and category theory are therefore closely aligned. However, the categorical language becomes important in the subsequent steps. It makes path composition explicit, isolates the level-preserving subcategory \(\mathsf{Path}^{\tau}_\diam\), supports quotienting by parallel level-preserving paths, and yields the thin reachability category \(\Sys_\diam\). It then allows post-deployment histories to be represented as functors from a time category into \(\Sys_\diam\), and synchronic comparison between histories to be represented by natural transformations. Thus, category theory provides a unified language for the successive abstractions needed by the AI identity problem: from primitive transformations, to composed lifecycle paths, to trustworthiness-level-preserving reachability, to time-indexed histories, and finally to comparison between histories. This is also where the construction recovers the original propositional identity criteria while making a stronger criterion available. The stronger criterion is expressed categorically: state-level identity is captured by isomorphism in \(\Sys_\diam\), and history-level identity by natural isomorphism in \(\Traj_{\diam,[t_0,T]}\). This is where our construction follows a classical categorical strategy \citep{mac1971categories} explicitly: instead of comparing objects only by their internal descriptions, it compares them through structure-preserving transformations, and then compares whole histories through transformations between \emph{functors}. This is aligned with Freyd's well-known characterization of category theory:\footnote{Freyd's characterization of categories is also recalled by Mac Lane in his discussion of categorical algebra \citep{maclane1965categorical}.}

\begin{quote}
[...] and category theory is likewise better described as the theory of functors. [...]
It is not too misleading, at least historically, to say that categories are what one must define in order to define functors, and that functors are what one must define in order to define natural transformations \citep[p.~1]{freyd1964abelian}
\end{quote}

Finally, our categorical construction deliberately separates
\emph{realization} of specific AI system tokens from \emph{reachability} of abstract states populated by these tokens over time. In doing so, we have not  introduced a monoidal structure on the category of AI system histories. Monoidal categorical structures are tools for modeling \emph{compositionality} in mathematics and computer science, such as in the case of concurrent resources, and interacting processes \citep{fong2018seven,spivak2014category}. They would be natural if the aim were to model composition of AI system tokens, or the dynamics of parallel AI lifecycle resources.  The present paper focuses on abstract states, using only concatenation of lifecycle paths between abstract states representing populations of AI tokens, composition in the thin reachability category, functorial composition over temporal intervals, and  composition of natural transformations. Compositional operations on AI system states are a distinct problem considered in related work \citep{ferrario2026convex}.

\subsection{Metaphysical Neutrality About Persistence}
The present formalism does not require the objects \(\mathsf A_\diam(t)\) to be temporal parts, stages, or complete AI system tokens. Any state \(\mathsf A_\diam(t)\)  may be occupied by several concrete systems with different models, hardware, software stacks, or deployment histories. Likewise, the functor \(\mathsf A_\diam\) represents an AI system history rather than defining the system as a mereological sum of instantaneous entities. A perdurantist interpretation remains possible: one may understand a history as organizing temporally ordered stages or temporal parts. But an endurantist interpretation is also possible: one may understand the same functor as representing how one persisting system bears different profile-relative properties over time. A stage-theoretic reading would require the additional claim that each state is itself a complete AI system individual. However, no such claim follows from the construction.

The proposed framework is therefore neutral among the principal metaphysical theories of persistence. Its narrower commitment is that AI system identity under change requires temporally indexed profile-relative states, admissible lifecycle transformations, realized histories, and explicitly specified trustworthiness invariants. The framework tells us what must remain stable, reachable, comparable, or mutually reachable relative to \(\diam\). That said, it does not settle whether the persisting AI system is best understood as an enduring continuant, a perduring four-dimensional entity, or a sequence of stages.

\subsection{Recognizing Identity Relations in Governance Practice}
A further question concerns how these identity relations can be recognized in operational settings. This is an epistemological and governance-oriented question, which is not at the core of the present work. Nonetheless, we will briefly comment on it. The application of the present formalization depends on the forms of documentation, monitoring, and lifecycle evidence available for determining whether the relevant identity conditions hold in practice. In fact, as identity is defined relative to a type datum \(\diam=(F,P,L_P)\), its application depends on artifacts that are increasingly required by AI governance regimes. In particular, the EU AI Act requires, for high-risk AI systems, documented risk-management processes, technical documentation kept up to date, automatic record-keeping and logging, documented quality-management procedures, and post-market monitoring systems that collect and analyse data on performance and compliance throughout the system's lifetime \citep{EU_AI_Act_2024}. Such materials can document the system's techno-function and intended use, the specification of the trustworthiness profile \(P\), the measurement and aggregation procedures producing \(q_P\), the trustworthiness-level function \(L_P\), state-monitoring records, and lifecycle transformations such as retraining, recalibration, threshold adjustment, rollback, or deployment-environment change. They can therefore provide partial evidence for weak identity, directed reachability, mutual reachability, and identity interruption through trustworthiness-level changes. However, the detailed methodology for recognizing these relations ``in the wild'' requires an epistemology of AI system identity for governance and MLOps practice, and lies beyond the scope of the present paper. Relatedly, this material supporting categorical identity provides evidence for comparisons between AI system tokens, but not a sufficient epistemic warrant for all responsible AI claims.

\section{Conclusion}
\label{section:conclusions}
AI systems change after deployment through retraining, recalibration, reconfiguration, monitoring interventions, and changes in their operational environments. These transformations make identity a central problem for responsible AI: without explicit criteria of sameness, it remains unclear when evidence transfers across AI tokens, when explanations and fairness claims remain applicable, or when an update creates a governance-relevant discontinuity. To address these questions formally, we develop a categorical account of AI system identity grounded in techno-function and trustworthiness. The formalization distinguishes weak from strong identity criteria, recovering and extending the approach of \citet{ferrario2025trustworthiness}. Its central claim is that AI system identity is function-plus-trustworthiness identity. Category theory renders this claim temporal and structural without reducing an AI system to a model, a material implementation, or a disconnected sequence of snapshots. The account also remains neutral between endurantist and perdurantist theories of persistence.

The framework provides a basis for analyzing identity-preserving updates, substantial modifications, and the transfer of evidence and accountability across versions and deployments. Its practical application requires explicit trustworthiness profiles, level functions, temporally coherent lifecycle records, and documented assumptions about admissibility and reversibility. Where these elements are absent, the identity of a changing AI system remains insufficiently specified for reliable AI governance and for well-grounded epistemological and ethical assessments of human--AI interaction.

\section*{Acknowledgments}
We acknowledge partial support by the Swiss National Science Foundation (SNSF), grant no. 229061. OpenAI's ChatGPT version GPT-5.6 Thinking was used for language editing, including grammar and typo detection, improvements to English expression, and assistance in drafting and refining the TikZ code for the figures. The author reviewed and validated all outputs and remains solely responsible for the manuscript's conceptual,
mathematical, and content.

\bibliographystyle{plainnat} 
\bibliography{sample-base}

\end{document}